\documentclass[12pt]{amsart}

\usepackage{amssymb}
\usepackage{amsmath}
\usepackage{amsrefs}
\usepackage[all]{xy}
\usepackage{mathrsfs}
\usepackage{a4wide}
\usepackage{xcolor}

 \theoremstyle{plain}
\newtheorem{theorem}{Theorem}[section]
\newtheorem{lemma}[theorem]{Lemma}
\newtheorem{proposition}[theorem]{Proposition}

\newtheorem{definition}[theorem]{Definition}

\theoremstyle{definition}

\newtheorem{claim}{Claim}

\theoremstyle{remark}

\numberwithin{equation}{section}

\DeclareMathOperator{\dom}{dom}

\DeclareMathOperator{\ran}{range}

  \def\Poset{{\mathbb P}_2}

\newcommand{\card}[1]{\lvert #1 \rvert}

%\def\mathcal#1{\mathscr{#1}}

%\showlabels{foo}

\title{Autohomeomorphisms of pre-images of $\mathbb N^*$}
\author{Alan Dow}
\keywords{Stone-Cech, PFA, automorphisms}
\subjclass{03A35,  54D35, 54A35}

\address{Department of Mathematics and Statistics, UNC Charlotte}
\begin{document}

\begin{abstract} In the study of the Stone-{\u C}ech remainder of
the real line a detailed study of the Stone-{\u C}ech remainder 
of the space $\mathbb N\times [0,1]$, which we denote as
$\mathbb M$, has often been utilized.  Of course the real line can be covered by
two closed sets that are each homeomorphic to $\mathbb M$. 
It is known that an autohomeomorphism of $\mathbb M^*$
induces an autohomeomorphism of $\mathbb N^*$. 
 We prove that it is consistent with there being
 non-trivial autohomeomorphism of $\mathbb N^*$ 
 that those induced by autohomeomorphisms of
  $\mathbb M^*$ are trivial.
\end{abstract}

\maketitle

\section{Introduction}
We consider the 
remainder $\mathbb M^*
 = \beta(\mathbb N\times [0,1])\setminus
  (\mathbb N\times [0,1])
  =\beta\mathbb M\setminus \mathbb M$ 
 and the projection
 \\ 
   $\pi :\mathbb M^*\rightarrow
    \mathbb N^*$
    satisfying
    that $\pi((a\times [0,1])^*)
     = a^*$ for all infinite 
      $a\subset\mathbb N$.

For each $u\in \mathbb N^*$,
       we let $\mathbb I_u$ denote the pre-image $\pi^{-1}(u)$
       of $u$. Each $\mathbb I_u$ is a continuum, and it
       is an example of a standard subcontinuum as per \cite{Hart92}.
       Perhaps it is appropriate to refer to each of
       these (more special) $\mathbb I_u$ as primary standard
       subcontinuum.    
\bigskip

It was shown in \cite{DH1993} that every autohomeomorphism
 $\Psi$ of $\mathbb M^*$ induces an autohomeomorphism
  $H_\Psi$ of $\mathbb N^*$ satisfying that $\Psi(\mathbb I_u)
   = \mathbb I_{H_\Psi(u)}$ (i.e. $H_\Psi\circ \pi = \pi\circ \Psi$).
The question addressed in this article,  raised
in \cite{DH1993}, is  whether every autohomeomorphism
 $H$ of $\mathbb N^*$ is induced by some
 autohomeomorphism of $\mathbb M^*$ in this manner.
 It is evident that this question has an affirmative answer
 if all autohomemorphisms of $\mathbb N^*$ are trivial.
 We prove in Theorem \ref{P2} below that it is consistent
 that a negative answer is also consistent.
 The affirmative answer under CH will appear in another paper.
The main reference  for results in this section is \cite{Hart92}. 

\bigskip

Within any compact space $X$
and a sequence $\{ A_n : n\in \mathbb N\}$
of subsets,  it is common to
let
   $u{-}\lim\{A_n\}_n$ denote 
  the usual set of $u$-limits,
    $\bigcap\{ \overline{\left(
    \bigcup_{n\in a}A_n\right)} : a\in u\}$.

\bigskip

In particular, if 
 $\{ [a_m, b_m ] : m\in \mathbb N\}$
 is a sequence of pairwise disjoint
 connected subintervals of
  $\mathbb N\times I$, 
   we let $[a_m,b_m]_u$ 
   denote $u{-}\lim\{[a_m,b_m]\}_m$.
 If there is some $U\in u$
 satisfying that the sequence  
   $ \{[a_m,b_m] : 
   m\in U\}$ is locally finite in 
   $\mathbb N\times I$,
   then $[a_m,b_m]_u$ is also an example of a
   standard subcontinuum of
   $\mathbb M^*$. 

\bigskip

Say that a sequence
 $\{ [a_m, b_m] : m\in \mathbb N\}$
 is a standard sequence   if it is a locally finite
 set of pairwise disjoint non-trivial connected
 intervals in $\mathbb N\times I$. 
 A standard sequence will be called rational
 if all the end-points are rational numbers.

\begin{definition} If
$\mathcal A = \{ [a_m, b_m] : m\in \mathbb N\}$
 is a standard sequence,
 let
 $\mathbb M^*_{\mathcal A}$ denote the set
  $ \mbox{cl}_{\beta\mathbb M}(\bigcup\mathcal A)\setminus \mathbb M$.
 \end{definition} 

\begin{lemma} If $\mathcal A$ is a standard sequence,
then $\bigcup \mathcal A$ is homeomorphic to $\mathbb M$,
and 
$\mathbb M^*_{\mathcal A}$ is homeomorphic to
 $\mathbb M^*$.
\end{lemma}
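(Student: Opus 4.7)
The plan is to identify $\bigcup\mathcal{A}$ explicitly as a countable topological disjoint union of copies of $[0,1]$ by exploiting local finiteness, and then to derive the statement about remainders from general facts about closed $C^*$-embedded subsets of locally compact normal spaces.

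First I would observe that since each $[a_m,b_m]$ is a non-trivial connected subset of $\mathbb{N}\times I$, it must lie in a single ``column'' $\{n_m\}\times I$ and be a genuine closed subinterval there, hence homeomorphic to $[0,1]$. Local finiteness of $\mathcal{A}$, combined with the fact that each $\{n\}\times I$ is compact, forces only finitely many intervals to meet any given column, and it also forces $X := \bigcup\mathcal{A}$ to be a closed subset of $\mathbb{M}$. The key subclaim is that each $[a_m,b_m]$ is clopen in $X$: given $x\in[a_m,b_m]$, choose an open neighbourhood $U$ of $x$ in $\mathbb{M}$ meeting only finitely many members of $\mathcal{A}$; because those finitely many compact intervals are pairwise disjoint, they can be separated inside the column by disjoint relatively open sets, producing $V\subset U$ with $V\cap X\subset[a_m,b_m]$. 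Hence $X$ carries the free topological sum topology on a countable family of arcs, which is manifestly homeomorphic to $\mathbb{N}\times[0,1]=\mathbb{M}$.

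For the second assertion, I would invoke two standard facts. Since $\mathbb{M}$ is metrizable hence normal, and $X$ is closed in $\mathbb{M}$, the Tietze extension theorem gives that $X$ is $C^*$-embedded in $\mathbb{M}$; therefore $\mathrm{cl}_{\beta\mathbb{M}}(X)$ is naturally homeomorphic to $\beta X$. Since $\mathbb{M}$ is locally compact, it is open in $\beta\mathbb{M}$, and consequently for any $A\subseteq\mathbb{M}$ one has $\mathrm{cl}_{\beta\mathbb{M}}(A)\cap\mathbb{M}=\mathrm{cl}_{\mathbb{M}}(A)$; applied to $A=X$, which is already closed in $\mathbb{M}$, this yields $\mathrm{cl}_{\beta\mathbb{M}}(X)\cap\mathbb{M}=X$. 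Combining these,
\[
\mathbb{M}^*_{\mathcal{A}}=\mathrm{cl}_{\beta\mathbb{M}}(X)\setminus\mathbb{M}=\beta X\setminus X=X^*,
\]
and the homeomorphism $X\cong\mathbb{M}$ from the first part transfers to $X^*\cong\mathbb{M}^*$.

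The only genuinely delicate step is the clopen subclaim, i.e.\ that the subspace topology on $\bigcup\mathcal{A}$ really is the free sum topology rather than something finer along the ``boundaries'' where columns accumulate; this is exactly where the hypothesis of local finiteness (as opposed to mere pairwise disjointness) is needed, so I would be careful to spell that out. Everything else is a routine application of standard $\beta$-space machinery.
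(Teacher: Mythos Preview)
Your argument is correct. The paper states this lemma without proof, evidently regarding it as routine; your write-up supplies exactly the standard details one would expect (local finiteness gives the free-sum topology on $\bigcup\mathcal A$, and the closed-hence-$C^*$-embedded machinery identifies $\mathrm{cl}_{\beta\mathbb M}(X)\setminus\mathbb M$ with $X^*$).

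One small wording slip: in your final paragraph you worry that the subspace topology might be ``something finer'' than the free sum. The free sum is already the finest topology making the inclusions continuous; the danger you are guarding against is that the subspace topology be strictly \emph{coarser} (intervals accumulating on one another so that some $[a_m,b_m]$ fails to be open in $X$). Your actual argument handles this correctly; only the word is off.
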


\begin{proposition}  If $K$ and $L$ are disjoint\label{basis} compact subsets of $\mathbb M^*$,
then there are standard sequences 
$\mathcal A = \{ [a_m, b_m ] : m\in \mathbb N\}$ and
 $\mathcal C = \{ [c_m, d_m] : m\in \mathbb N\}$ 
 and disjoint sets $N_K, N_L$ of $\mathbb N$ such
 that 
 \begin{enumerate}
 \item for all $m\in\mathbb N$, $a_m< c_m < d_m < b_m$,
 \item $K$ is contained in the $\beta\mathbb M$-closure of $\bigcup \{ [c_m, d_m] : m\in N_K\}$
 \item $L$ is contained in the $\beta\mathbb M$-closure of $\bigcup\{[c_m, d_m] : m\in N_L\}$.
 \end{enumerate}
\end{proposition}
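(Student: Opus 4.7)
The plan is to use Urysohn's Lemma in the normal space $\beta\mathbb M$ to produce disjoint open subsets $U, V$ of $\mathbb M$ whose $\beta\mathbb M$-closures swallow $K$ and $L$ respectively, and then to carefully select closed subintervals within these open sets to produce the standard sequence $\mathcal C$ and its strict enlargement $\mathcal A$.

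For the Urysohn step: normality of $\beta\mathbb M$ applied to the disjoint compact sets $K, L$ yields a continuous $\hat f : \beta\mathbb M \to [0,1]$ with $\hat f \equiv 1$ on $K$ and $\hat f \equiv 0$ on $L$. Setting $f := \hat f|_{\mathbb M}$, the subsets $U := f^{-1}((\tfrac{2}{3},1])$ and $V := f^{-1}([0,\tfrac{1}{3}))$ of $\mathbb M$ are disjoint and open; density of $\mathbb M$ in $\beta\mathbb M$, together with continuity of $\hat f$, gives $K \subseteq \overline{U}^{\beta\mathbb M}$ and $L \subseteq \overline{V}^{\beta\mathbb M}$.

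For the selection of intervals: for each $n$, set $U_n := U \cap (\{n\} \times [0,1])$ and $V_n := V \cap (\{n\} \times [0,1])$. Inside each $U_n$ (respectively $V_n$), I pick a finite pairwise disjoint family of closed rational-endpoint subintervals approximating $U_n$ finely --- concretely, all maximal dyadic closed subintervals of depth at most $h(n)$, with $h(n) \to \infty$ fast enough to resolve the fine structure of $U_n$ slice by slice. The combined family assembles into a standard sequence $\mathcal C = \{[c_m,d_m]\}_m$, with $N_K$ indexing the $U$-intervals and $N_L$ the $V$-intervals; disjointness of $N_K$ and $N_L$ follows from $U \cap V = \emptyset$. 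The sequence $\mathcal A$ then arises by slightly enlarging each $[c_m,d_m]$ within its slice to some $[a_m,b_m] \supsetneq [c_m,d_m]$, keeping pairwise disjointness of $\mathcal A$ by local finiteness of $\mathcal C$.

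To verify condition (2) (with (3) symmetric), I take $p \in K$ and open $W \ni p$ in $\beta\mathbb M$. Since $p \in \overline{U}^{\beta\mathbb M}$, $W \cap U \ne \emptyset$, and since $p \in \mathbb M^*$, the set $\{n : W \cap U_n \ne \emptyset\}$ is infinite (otherwise $\overline{W \cap U}^{\beta\mathbb M} \subseteq \mathbb M$ would contradict $p \in \overline{W \cap U}^{\beta\mathbb M}$). For $n$ in this infinite set where $h(n)$ is large enough to resolve some component of $W \cap U_n$, a chosen dyadic interval sits inside $W \cap U_n$, witnessing $W \cap \bigcup_{m \in N_K}[c_m,d_m] \ne \emptyset$. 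The hard part will be calibrating $h$: a priori the components of $W \cap U_n$ can shrink arbitrarily fast as $n \to \infty$ along the ultrafilter $\pi(p)$, so $h(n)$ must grow fast enough to resolve them. This is feasible because the construction is permitted to depend on $K, L$ (and hence on $U, V$), and local finiteness of $\mathcal C$ is preserved for any finite-valued $h$.
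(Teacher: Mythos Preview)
The paper states this proposition without proof, deferring to the survey \cite{Hart92}, so there is no in-paper argument to compare against.  Your overall strategy---Urysohn separation followed by selecting finitely many closed subintervals per level---is sound and is essentially the standard route.

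However, your verification of condition (2) has a genuine gap.  You try to show that every neighborhood $W$ of $p\in K$ \emph{contains} one of your dyadic intervals, i.e.\ that some depth-$h(n)$ dyadic interval sits inside $W\cap U_n$ for suitable $n$.  But the diameter of $W\cap U_n$ depends on $W$, which is arbitrary, while $h$ is fixed in advance (depending only on $K,L$).  For a bad $W$ the sets $W\cap U_n$ may all have length below $2^{-h(n)}$, and then no dyadic interval fits; your closing remark that the construction may depend on $K,L$ does not address this, since the obstruction is the dependence on $W$.  The fix is to calibrate $h$ differently and reverse the logic: choose $h(n)$ so that $2^{-h(n)}$ is less than the positive distance in $\{n\}\times[0,1]$ between the disjoint compact sets $f^{-1}([\tfrac34,1])$ and $f^{-1}([0,\tfrac23])$.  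Then every depth-$h(n)$ dyadic interval meeting $f^{-1}([\tfrac34,1])$ lies inside $U_n$, so the dyadic intervals you select cover $f^{-1}([\tfrac34,1])\cap(\{n\}\times[0,1])$, and hence $\bigcup_{m\in N_K}[c_m,d_m]\supseteq f^{-1}([\tfrac34,1])$.  Since $\hat f\equiv 1$ on $K$, every neighborhood of a point of $K$ meets $f^{-1}((\tfrac34,1])\subseteq\bigcup_{m\in N_K}[c_m,d_m]$, and (2) follows directly---no need to fit an interval inside $W$.  (A minor separate point: maximal closed dyadic subintervals of $U_n$ can share endpoints, so either merge adjacent ones or shrink each slightly before declaring $\mathcal C$ pairwise disjoint.)
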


A point $x$ of $[a_m,b_m]_u$
is a cut-point 
if for every $f\in \mathbb N^{\mathbb N}$,
there is a
sequence
  $a_m < c_m < d_m < b_m$ ($m\in \mathbb N$)
   such that
   \begin{enumerate}
   \item $x$ is a cut-point of $[c_m,d_m]_u$,
   \item $\{ m\in 
    \mathbb N : d_m-c_m < 1/f(m)\}\in u$.
   \end{enumerate}

Say that a standard sequence $\{[c_m, d_m] : m\in \mathbb N\}$
is $f$-thin if $d_m-c_m < 1 / f(m)$ for all $m\in\mathbb N$.

 Certainly, if $a_m < x_m < b_m$
  (for all $m\in\mathbb N$),
   then $x_u = u{-}\lim\{x_m : m\in \mathbb N\}$
   is a (standard) cut-point of 
    $[a_m, b_m]_u$.  
   \bigskip

The standard cut-points of
 $[a_m, b_m]_u$ are 
 naturally linearly ordered 
 as in the ultraproduct. The
 closure of any subinterval 
 of the standard cut-points 
 is said to be a subinterval
 of $[a_m, b_m]_u$. 

   \bigskip

    If  
     $\{ [a_m, b_m]: m\in \mathbb N\}$
     is a standard sequence
     then for every selection
      $a_m\leq x_m \leq b_m$,
       the set
        $\{x_u : u\in \mathbb N^*\}$
    maps homeomorphically to $\mathbb N^*$
     by the map $x_u$ being sent to $u$.
     Say that a sequence $\{ x_m : m\in \mathbb N\}$
     is a selector sequence (for $\{ [a_m, b_m] : m\in \mathbb N\}$)
     if $a_m < x_m < b_m$ for all $m\in \mathbb N$.

     \bigskip

     \begin{proposition}
     If a subset $L$ of $\mathbb M^*$
     is homeomorphic to an interval
     of any standard subcontinuum
     of any standard sequence  
      then it is an actual subinterval 
      of a standard subinterval
      of a standard sequence.
     \end{proposition}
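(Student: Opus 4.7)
My plan is to first confine $L$ to a single pre-image $\mathbb I_u$ and then construct a standard sequence in $\mathbb N\times I$ realising $L$ as a subinterval of the associated standard subcontinuum. For the first step, $L$ is connected and $\pi\colon\mathbb M^*\to\mathbb N^*$ is continuous into the zero-dimensional space $\mathbb N^*$, so $\pi(L)$ is a singleton $\{u\}$ and $L\subset\mathbb I_u$. Fix a homeomorphism $h\colon J\to L$ with $J$ a subinterval of a standard subcontinuum $[a'_m,b'_m]_{u'}$, and let $p,q\in L$ be the images under $h$ of the two order-theoretic endpoints of $J$ in its standard cut-point ordering.

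For the second step, I would use the cut-point approximation property together with the density of standard cut-points in $J$ to extract selectors. Fix a countable dense set $\{z^{(n)}:n\in\mathbb N\}$ of standard cut-points of $J$. A diagonal construction across $n$ produces selectors $z^{(n)}_m\in[0,1]$ realising $h(z^{(n)})=z^{(n)}_u$ on a set in $u$, together with enclosing selectors $x_m\le y_m$ satisfying $x_u=p$, $y_u=q$ and $z^{(n)}_m\in[x_m,y_m]$ for every $n$. The standard sequence $\{[x_m,y_m]:m\in\mathbb N\}$ then places each $h(z^{(n)})$ as a standard cut-point of the standard subcontinuum $[x_m,y_m]_u$ between $x_u$ and $y_u$, and the tightness of the enclosing selectors is chosen so that $\{z^{(n)}_m:n\in\mathbb N\}$ is dense in $[x_m,y_m]$ for each $m$.

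The hard step will be verifying that $L$ coincides with the closure inside $[x_m,y_m]_u$ of the set $\{z^{(n)}_u:n\in\mathbb N\}$ of standard cut-points. Density of $\{h(z^{(n)})\}_n$ in $L$ together with closedness gives $L\subset[x_m,y_m]_u$, while the reverse inclusion reduces to showing that the linear order on $\{z^{(n)}\}_n$ in $J$ is reproduced by the ultrapower order on the selectors $\{z^{(n)}_m\}_n$ modulo $u$, so that no standard cut-point of $[x_m,y_m]_u$ between $x_u$ and $y_u$ is missed by $L$. Arranging the diagonal construction so that this joint order agreement holds on a single set in $u$ for all countably many pairs simultaneously is the technical crux, and it leans on the structural machinery for standard subcontinua developed in \cite{Hart92}.
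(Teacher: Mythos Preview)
The paper does not supply a proof of this proposition; it is stated as background, with \cite{Hart92} cited as the main reference for the section. So there is no ``paper's own proof'' to compare against, but your sketch still contains a genuine gap that needs naming.

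The central unjustified step is the sentence ``A diagonal construction across $n$ produces selectors $z^{(n)}_m\in[0,1]$ realising $h(z^{(n)})=z^{(n)}_u$ on a set in $u$, together with enclosing selectors $x_m\le y_m$ satisfying $x_u=p$, $y_u=q$.'' The homeomorphism $h$ is purely topological, so all it tells you is that $h(z^{(n)})$ is a topological cut-point of $L$. You have given no reason why this point of $\mathbb I_u$ should be a \emph{standard} cut-point --- that is, why it should arise as $w_u$ for some selector sequence $\{w_m\}_m$ at all. That is exactly the content of the proposition, so the argument is circular at this point. The same objection applies to $p$ and $q$: subintervals of the ultrapower order need not have endpoints, and even when they do there is no reason those endpoints should be of the form $x_u$, $y_u$.

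The argument in \cite{Hart92} runs quite differently. One does not track images of cut-points through $h$; instead one uses $h$ only to conclude that $L$ is a decomposable continuum (a topological invariant), and then analyses directly how an arbitrary decomposable subcontinuum sits inside $\mathbb I_u$. The selectors are extracted from continuous real-valued functions on $\mathbb M$ that separate the pieces of a decomposition of $L$, in the spirit of Proposition~\ref{basis}, rather than from any presupposed correspondence between cut-points of $J$ and cut-points of $\mathbb I_u$.
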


I believe this next result is new.
For any continuous $g:\mathbb M\rightarrow I$
let $g^*$ denote the natural extension of $g$
mapping $\mathbb M^*$ to $I$.

\begin{theorem} Suppose\label{new1} that
 $\Psi : \mathbb M^*\rightarrow \mathbb M^*$
 is a homeomorphism
 and let $\{ [a_m, b_m] : m\in \mathbb N\}$
 be a standard sequence in $\mathbb M$.
 Then, for any selector sequence
  $\{ x_m : m\in \mathbb N\}$
  for $\{ [a_m,b_m]: m\in \mathbb N\}$,
  there is a standard sequence
   $\{ [c_j, d_j] : j\in \mathbb N\}$
   and a homeomorphism $h: \mathbb N^*
   \rightarrow \mathbb N^*$ such that
   for each $u\in \mathbb N^*$,
   \begin{enumerate}
\item 
 $\Psi(x_u) \in [c_j,d_j]_{h(u)}$,
 \item $[c_j,d_j]_{h(u)}$ is a subinterval
 of $\Psi([a_m,b_m]_u)$,
\item the mapping $\Psi(x_u) $ to
 $h(u)$ is a homeomorphism from\\
  $\Psi(\{ x_m: m\in\mathbb N\}^*)
   = \{ \Psi(x_u) : u\in \mathbb N^*\}$
   to $\mathbb N^*$.
   \end{enumerate}  
\end{theorem}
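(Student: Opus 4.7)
My plan is to set $h = H_\Psi$ (the autohomeomorphism of $\mathbb N^*$ induced by $\Psi$) and to construct $\{[c_j,d_j]\}$ by a single application of Proposition~\ref{basis}. For simplicity I assume $[a_m,b_m]\subset\{m\}\times I$, so that $\pi(x_u)=u$ and $\pi(\Psi(x_u))=h(u)$ for every $u\in\mathbb N^*$; the general case should follow analogously after matching column maps. Under this assumption, clause~(3) is immediate: the map $\pi\upharpoonright\Psi(\{x_m\}^*)$ is a continuous bijection onto $\mathbb N^*$ sending $\Psi(x_u)$ to $h(u)$, and hence a homeomorphism of compact Hausdorff spaces.

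For clauses~(1) and~(2) I apply Proposition~\ref{basis} to the pairwise disjoint compacta
\[
K=\Psi(\{x_m\}^*),\qquad L=\Psi(\{a_m\}^*)\cup\Psi(\{b_m\}^*).
\]
Disjointness of $\{x_m\}^*,\{a_m\}^*,\{b_m\}^*$ follows because $\{x_m\},\{a_m\},\{b_m\}$ are pairwise disjoint discrete subsets of $\mathbb M$, hence separated by continuous functions; $\Psi$ being a homeomorphism preserves this. The proposition yields standard sequences $\mathcal A_0=\{[a^0_j,b^0_j]\}$ and $\mathcal C_0=\{[c^0_j,d^0_j]\}$ with $a^0_j<c^0_j<d^0_j<b^0_j$ and disjoint $N_K,N_L\subset\mathbb N$ such that
\[
K\subset\overline{\textstyle\bigcup_{j\in N_K}[c^0_j,d^0_j]}^{\beta\mathbb M}\quad\text{and}\quad L\subset\overline{\textstyle\bigcup_{j\in N_L}[c^0_j,d^0_j]}^{\beta\mathbb M}.
\]
I then pass from $\mathcal C_0\upharpoonright N_K$ to a column-aligned standard sequence $\mathcal C=\{[c_j,d_j]:j\in\mathbb N\}$ with $[c_j,d_j]\subset\{j\}\times I$: since $\pi\upharpoonright K$ maps onto $\mathbb N^*$, the column map on $N_K$ has cofinite image in $\mathbb N$, so I keep a single interval of $\mathcal C_0$ per column in this image and fill in the finitely many missing columns arbitrarily.

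Clause~(1) then reads off: each $\Psi(x_u)$ lies in $\mathbb M^*_{\mathcal C}$ with $\pi$-image $h(u)$, so by column alignment its index ultrafilter is $h(u)$, yielding $\Psi(x_u)\in[c_j,d_j]_{h(u)}$. For clause~(2), the disjointness $N_K\cap N_L=\emptyset$ makes $\overline{\bigcup\mathcal C}^{\beta\mathbb M}$ disjoint from $L$, so $[c_j,d_j]_{h(u)}\subset\mathbb I_{h(u)}$ misses both $\Psi(a_u)$ and $\Psi(b_u)$. These are cut-points of $\mathbb I_{h(u)}$ bounding the subcontinuum $\Psi([a_m,b_m]_u)$, and $\Psi(x_u)$ lies strictly between them (since $a_u<x_u<b_u$ in $\mathbb I_u$ and $\Psi\upharpoonright\mathbb I_u$ is an order-homeomorphism onto $\mathbb I_{h(u)}$). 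Hence the connected set $[c_j,d_j]_{h(u)}$ containing $\Psi(x_u)$ and avoiding both cut-points must lie in the component of $\mathbb I_{h(u)}\setminus\{\Psi(a_u),\Psi(b_u)\}$ containing $\Psi(x_u)$---the interior of $\Psi([a_m,b_m]_u)$---making $[c_j,d_j]_{h(u)}$ a subinterval of $\Psi([a_m,b_m]_u)$.

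The principal obstacle is the column-alignment step: the naive thinning of $\mathcal C_0\upharpoonright N_K$ to one interval per column may not preserve $K\subset\overline{\bigcup\mathcal C}^{\beta\mathbb M}$, because within a given column the point $\Psi(x_u)$ may be approximated by one of several competing intervals and no uniform selection need work for all $u$ at once. I expect this to be handled either by a mild strengthening of Proposition~\ref{basis} that forces its output $\mathcal C$ to be column-aligned to begin with (using the same machinery behind its proof), or by a coherent selection argument exploiting that $\pi\upharpoonright K$ is a bijection onto $\mathbb N^*$.
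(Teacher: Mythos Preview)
Your approach diverges from the paper's, and the gap you identify at the end is real and more serious than you suggest. By committing to $h=H_\Psi$ at the outset, you force the standard sequence $\{[c_j,d_j]\}$ to be column-aligned (one interval in each $\{j\}\times I$). But nothing in Proposition~\ref{basis} gives you this, and the ``thinning to one interval per column'' step can genuinely fail: a point $\Psi(x_u)\in\mathbb I_{H_\Psi(u)}$ may be approached in $\beta\mathbb M$ by intervals from $\mathcal C_0\!\restriction\!N_K$ lying in many different columns, and there is no selection principle that coherently picks one interval per column while keeping all of $K$ in the closure. Your hoped-for ``mild strengthening'' of Proposition~\ref{basis} would amount to showing that $K=\Psi(\{x_m\}^*)$ is an $\mathbb N^*$ cut-set for the full column sequence $\{\{j\}\times I\}$, and that in turn requires each $\Psi(x_u)$ to be a cut-point of $\mathbb I_{H_\Psi(u)}$ in the technical $f$-thin sense --- which is essentially the content of the theorem, not a lemma available beforehand.

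The paper avoids this by \emph{not} setting $h=H_\Psi$ and \emph{not} column-aligning. Instead it lifts a continuous $g:\mathbb M\to I$ (with $g(x_m)=1$, $g=0$ off $\bigcup(a_m,b_m)$) through $\Psi$ to $g_1$ with $g_1^*=g^*\circ\Psi^{-1}$, and then \emph{defines} $\{[c_j,d_j]:j\in\mathbb N\}$ as the maximal intervals on which $g_1^{-1}(\tfrac12,1]$ is dense. This gives a single global standard sequence with no alignment hypothesis. For each $u$ one then finds an auxiliary standard sequence $\{[r_n,s_n]\}$ and a $v$ with $\Psi(x_u)\in[r_n,s_n]_v\subset[c_j,d_j]_w$, where $w$ is the image of $v$ under the finite-to-one map $n\mapsto j_n$ (the index of the $[c_j,d_j]$ containing $[r_n,s_n]$); one sets $h(u)=w$ and verifies continuity locally. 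So the paper's $h$ is the projection onto the \emph{new} index set, not onto the column index, and clause~(3) is established by showing this projection is continuous and one-to-one, not by invoking $\pi$.

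A secondary point: your clause~(2) argument speaks of $\Psi\!\restriction\!\mathbb I_u$ as an ``order-homeomorphism''; there is no global order on $\mathbb I_u$, only a betweenness relation among topological cut-points. Recasting your argument in terms of betweenness (which \emph{is} preserved by homeomorphisms) would repair that passage, but the principal obstacle remains the column alignment.
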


 \begin{proof}
 Choose a continuous 
  function $g: \mathbb M\rightarrow I$
  satisfying that, for every $m\in \mathbb N$,
   $g(x_m)=1$ and $g(r)=0$ for all $r\notin
    \bigcup\{ (a_m,b_m): m\in \mathbb N\}$. 
\medskip

    Choose a continuous function $g_1 : \mathbb M\rightarrow I$
    satisfying that $g_1^* = g^*\circ \Psi^{-1}$. 
    For each $u\in \mathbb N^*$, let $y_u = \Psi(x_u)$. 
    Observe that $g_1^*(y_u)=1$ for all $u\in \mathbb N^*$. 
\medskip

Let $\{ [c_j , d_j ] : j\in \mathbb N\}$ 
enumerate all maximal connected intervals in $\mathbb M$
that satisfy that $(c_j,d_j)\cap g_1^{-1}(\frac12, %)[
1]$ is dense in $[c_j,d_j]$. 
\medskip

Consider any $u\in \mathbb N^*$ and the subcontinuum
 $\tilde I_u = \Psi(I_u)$. There is a 
 standard interval subsequence
   $\{ [\tilde a_m,\tilde b_m] : m\in \mathbb N\}$
   of $\{ [a_m, b_m] : m\in \mathbb N\}$ 
   that has   $\{ x_m : m\in \mathbb N\}$ as a selector
and is contained in $g^{-1}(\frac 34, %)[
1])$. The point $y_u$ is in $\Psi([\tilde a_m,\tilde b_m]_u)$
which is a subinterval of $\tilde I_u$. 
Also $\Psi([\tilde a_m,\tilde b_m]_u)$ 
is contained in the interior of $g_1^{-1}(\frac 12, %)[
1]$. 
Now choose any standard sequence $\{ [r_n, s_n]: n\in \mathbb N\}$
so that $g_1
\left(\bigcup\{[r_n,s_n]:n\in \mathbb N\}\right)^*\subset 
(\frac12, %)[
1]$
and so that $\Psi([\tilde a_m,\tilde b_m]_u)$ is a subinterval
of $[r_n,s_n]_v$ for some (unique) ultrafilter $v\in\mathbb N^*$.
\bigskip

For each $n\in \mathbb N$, choose the unique
 $j_n\in\mathbb N$ so that $[r_n,s_n]\subset (c_{j_n},d_{j_n})$.
 Since $\{ [r_n , s_n] : n\in\mathbb N\}$ is locally finite,
  the sequence $\{ [c_{j_n}, d_{j_n}] : n\in \mathbb N\}$
  is also locally finite. Hence
   $\{ [c_{j_n} , d_{j_n} ] : n\in \mathbb N\}$ is a standard
   sequence. The mapping $n\mapsto j_n$ is finite-to-one
   and now let $w$ be the finite-to-one image of
    $v$. Consider the standard subcontinuum
     $  [c_{j },d_{j } ]_w $.  We check that
      $ [c_{j },d_{j } ]_w $ is an interval
      in $\tilde I_u$, and that
       $h(u) =  w$   is the map 
    that satisfies the statement of the Lemma.
    \medskip

    The continuum  $[c_{j },d_{j } ]_w$
    is contained in the component
     $\tilde I_u$ of $\Psi\left(~\left(
     \bigcup\{[a_m,b_m]: m\in \mathbb N\}\right)^*\right)$
     and contains  the interval $\Psi([\tilde a_m,\tilde b_m]_u)$.
     It follows from the results in [Hart92], 
     that $[r_n,s_n]_v$ is an interval in
     $[c_j,d_j]_w$.  Choose any continuous
      function $g_2 : \mathbb M\rightarrow I$
      satisfying that $g_2^*(y_u)=1$ 
      and $g_2^*(\tilde I_u\setminus [r_n,s_n]_v)=0$.
      \medskip

      Set $L = \{ n : g_2([a_n,b_n]) \setminus [0,\frac14]\neq\emptyset\}$
      and $\tilde L = \{ j_n : n\in L\}$. 
      Since $g^*_2(y_u)=1$, it follows that $L\in v$ and $\tilde L\in w$.
  Let the function sending each $n$ to $j_n$ be denoted by $\rho$.
  Thus $\rho^*(v)=w$. For every $\tilde v\in \mathbb N^*$ such
  that $\rho^*(\tilde v) = w$, we have that
     $[r_n,s_n]_{\tilde v}$ is a subset of $[c_j,d_j]_w\subset
      \tilde I_u$. Therefore $v$ is the unique ultrafilter
      satisfying that $\rho^*(v)=w$ and $L\in v$.
      Therefore, by removing a finite set from $L$,
       we can assume that $\rho$ is 1-to-1 on  $L$. 
       \medskip

       Next, we note that $y_u$ is in the interior
       of $\left( \bigcup\{ [r_n,s_n] : n\in L\right)^*$
       (easily checked by the results in [Hart92])
       and so there is a $U\in u$ such 
       that $\Psi(\{x_m : m\in U\}^*)\subset
        \left( \bigcup\{ [r_n,s_n] : n\in L\right)^*$.
        
      \medskip

      We now have our desired mapping defined on $U^*$.
      For each $\tilde u\in U^*$, there is a unique
        $v_{\tilde u}\in L^*$ so that $y_{\tilde u}
         \in [r_n,s_n]_{v_{\tilde u}}$.
The continuous
projection mapping on $ \left( \bigcup\{ [r_n,s_n] : n\in L\right)^*$
to $L^*$ agrees with the mapping sending $y_{\tilde u}$ to $v_{\tilde u}$.
The mapping sending $\tilde u$ to $y_{\tilde u}$ is induced
by the continuous mapping $\Psi\restriction \{ x_m : m\in U\}^*$.
Therefore the mapping         
         $h(\tilde u)$ sent to   $\rho^*(v_{\tilde u})$
         is continuous (and 1-to-1).
         
 \end{proof}

    \section{$\mathbb N^*$ cut-sets}

\begin{definition}
Say that a subset $K$ of $\mathbb M^*$ is an $\mathbb N^*$
cut-set (for the standard
 sequence)
 if there is a standard sequence 
 $\mathcal A = \{ [a_m,b_m] : m\in \mathbb N\}$
 such that
 \begin{enumerate}
 \item $K$ is a subset $\mathbb M^*_{\mathcal A}$,
 \item $K\cap [a_m,b_m]_u$ is a cut-point of $[a_m,b_m]_u$
 for every $u\in \mathbb N^*$,
\item the  mapping from 
$\left(\bigcup\{[a_m,b_m]:m\in \mathbb N\}\right)^*$
where $[a_m,b_m]$ is sent to $m$,
 to $\mathbb N^*$ is 1-to-1 on $K$.
  \end{enumerate} 
An $\mathbb N^*$ cut-set $K$ 
is trivial if  $K$ equals $D^*$
for  some closed discrete $D\subset \mathbb M$.
\end{definition}

A concept called non-trivial \textit{maximal nice filters\/} 
on spaces of the form $\mathbb N\times X$ (with $X$ compact)
was
introduced and studied in \cite{DowPAMS} where it was shown
that PFA implies these do not exist if $X$ is metrizable. The neighborhood
filters in $\beta \mathbb M$ of a
 $\mathbb N^*$ cut-set result in maximal nice filters.

Let us a recall that a set $K$ is said to be a $P_\kappa$-set in a space $X$
if every $G_{<\kappa}$ of $X$ that contains $K$ has $K$ in its interior. It is
well-known that $D^*$ is a $P_{\mathfrak b}$-set for every closed subset
$D$ of $\mathbb M$. In this next result we prove that an 
 $\mathbb N^*$ cut-set has a neighborhood base resembling 
 that of a cut-point.  This result isn't strictly needed since the 
  $\mathbb N^*$ cut-sets that we intend to consider are 
  those of the form $\Psi(D^*)$ when $D^*$ is a trivial 
  $\mathbb N^*$ cut-set.

\begin{proposition}  Every $\mathbb N^*$ cut-set\label{Psets} $K$ of $\mathbb M^*$ is a
 $P_{\mathfrak b}$-set in $\mathbb M^*$. Also, if
  the standard sequence $\mathcal A = \{ [a_m, b_m]  : m\in \mathbb N\}$
witnessing that $K$ is an $\mathbb N^*$ cut-set, then there is a
family of rational standard sequences $\{ \mathcal C_f : f\in \mathbb N^{\mathbb N}\}$ 
such that, for each $f\in \mathbb N^{\mathbb N}$,

$\mathcal C_f = \{ [c^f_m,d^f_m] \subset [a_m,b_m] : m\in \mathbb N\}$
  is $f$-thin

\noindent and the family $\{ \left(\bigcup\mathcal C_f \right)^* : f\in \mathbb N^{\mathbb N}\}$
is a neighborhood base for $K$ in $\mathbb M_{\mathcal A}^*$. Note also
that the family $\{ \left(\bigcup\mathcal C_f\right)^* : f \in \mathbb N^{\mathbb N}\}
$
is ${<}\mathfrak b$-directed.
\end{proposition}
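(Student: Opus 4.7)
My plan is to build the family $\mathcal C_f$ from a single selector sequence $\{x_m\}$ whose $u$-limits pick out the cut-points $p_u := K\cap [a_m,b_m]_u$ for $u\in \pi(K)$. Given such a selector, I shrink rationally around each $x_m$ at rate $1/f(m)$ to obtain $\mathcal C_f$, and then the $P_{\mathfrak b}$ property follows from ${<}\mathfrak b$-directedness of $(\mathbb N^{\mathbb N},\leq^*)$ transferred through the construction.

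The main obstacle is the first step: showing that the cut-points $p_u$ are standard cut-points $x_u$ arising from a single selector sequence $\{x_m\}$ with $a_m<x_m<b_m$. Since condition (3) of the cut-set definition makes $K$ project injectively to $\pi(K)\subseteq \mathbb N^*$, the map $\sigma := (\pi|_K)^{-1}$ is a homeomorphism from $\pi(K)$ onto $K$. Applying the cut-point definition at each $u\in \pi(K)$ with thinness function $f(m)=m$, I would iteratively refine witnessing subcontinua $[c_m,d_m]_u$ around $p_u$, then use a Tietze-type extension off the closed set $\pi(K)\subseteq \beta \mathbb N$ --- applied to the coordinate of $\sigma(u)$ under a fixed linear homeomorphism $[a_m,b_m]\to I$ --- to select values $x_m\in (a_m,b_m)$ so that $x_u=p_u$ for every $u\in \pi(K)$.

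With the selector in hand, for each $f \in \mathbb N^{\mathbb N}$ I pick rationals $c^f_m, d^f_m\in (a_m,b_m)$ with $c^f_m<x_m<d^f_m$ and $d^f_m-c^f_m<1/f(m)$, obtained by rational rounding of $x_m\pm 1/(3f(m))$, truncated to stay inside $(a_m,b_m)$. Then $\mathcal C_f$ is a rational $f$-thin standard sequence, and $x_u$ is a standard cut-point of $[c^f_m,d^f_m]_u$. From this standard-cut-point structure (using the analysis in \cite{Hart92}), $(\bigcup \mathcal C_f)^*$ contains a neighborhood of each $x_u=p_u$ in $\mathbb M^*_{\mathcal A}$, hence of all of $K$.

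For the neighborhood-base property: given any open $U\supseteq K$, the complement $\mathbb M^*_{\mathcal A}\setminus U$ is compact and disjoint from $K$, so Proposition \ref{basis} yields a standard sequence separating them; combined with the cut-point property at each $u\in \pi(K)$ and compactness of $\pi(K)$, this extracts a single $f$ with $(\bigcup \mathcal C_f)^*\subseteq U$. The explicit formula also makes $f\leq^* g$ imply $[c^g_m,d^g_m]\subseteq [c^f_m,d^f_m]$ for cofinitely many $m$, whence $(\bigcup \mathcal C_g)^*\subseteq (\bigcup \mathcal C_f)^*$; thus ${<}\mathfrak b$-directedness of $(\mathbb N^{\mathbb N},\leq^*)$ transfers to the family. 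The $P_{\mathfrak b}$ conclusion is then immediate: given fewer than $\mathfrak b$ open neighborhoods $U_\alpha\supseteq K$, pick $f_\alpha$ with $(\bigcup \mathcal C_{f_\alpha})^*\subseteq U_\alpha$, dominate the $f_\alpha$'s by some $g$, and observe that $(\bigcup \mathcal C_g)^*$ is a neighborhood of $K$ contained in $\bigcap_\alpha U_\alpha$.
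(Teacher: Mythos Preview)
Your first step is a genuine gap, and it is fatal.  Producing a single selector sequence $\{x_m\}$ with $x_u = p_u$ for every $u\in\pi(K)$ is precisely the assertion that $K = \{x_m:m\in\mathbb N\}^*$, i.e.\ that $K$ is a \emph{trivial} $\mathbb N^*$ cut-set.  That is the principle $(\dagger_2^+)$, which is not a theorem of \ZFC: the paper derives it from OGA together with $\mathfrak c=\aleph_2$ (Lemma~\ref{firstOGA}), and non-trivial cut-sets exist under CH.  Your Tietze idea does not rescue this.  If $g:\mathbb M\to I$ is the fibrewise linear map, then $g^*(p_u)\in I$ is well-defined, but it only determines $p_u$ up to its monad in $[a_m,b_m]_u$: the standard point $x_u$ you would build from the extended values satisfies $g^*(x_u)=g^*(p_u)$, yet many distinct cut-points of $[a_m,b_m]_u$ share the same $g^*$-value, and there is no reason $x_u=p_u$.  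Consequently your explicit formula for $\mathcal C_f$ (rational rounding of $x_m\pm 1/(3f(m))$) and the easy ${<}\mathfrak b$-directedness you derive from it both collapse.

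The paper's argument works without any global selector.  For a given open $U\supseteq K$ it first uses Proposition~\ref{basis} to separate $K$ from $\mathbb M^*_{\mathcal A}\setminus U$ by a standard sequence, and reads off from that sequence a function $f$ fine enough that any $f$-thin interval meeting the inner sequence must lie inside the outer one.  Only then, for each $x\in K$ \emph{individually}, is the cut-point definition invoked to produce an $f$-thin sequence $\mathcal C_x$ around $x$; compactness of $K$ lets finitely many of these be spliced into a single $\mathcal C_f$ with $(\bigcup\mathcal C_f)^*\subset U$.  The ${<}\mathfrak b$-directedness is proved separately by a quantitative argument: given $\{f:f\in F\}$ with $|F|<\mathfrak b$, one first thickens each $\mathcal C_f$ slightly to $\mathcal C_{g_f}$, records the gaps via functions $h_f$, dominates the $h_f$'s by $\bar f$, and then checks arithmetically that $[c^m_{\bar f},d^m_{\bar f}]\cap [c^m_{g_f},d^m_{g_f}]\neq\emptyset$ forces $[c^m_{\bar f},d^m_{\bar f}]\subset(c^m_f,d^m_f)$ for large $m$.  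None of this requires the $p_u$ to be standard cut-points.
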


\begin{proof}  Let $\mathcal A = \{ [a_m, b_m]  : m\in \mathbb N\}$ be
a  standard sequence 
witnessing that $K$ is an $\mathbb N^*$ cut-set. Let $U$ be an 
 open subset of $\beta \mathbb M_{\mathcal A}$ that contains $K$.  
 Select, by Lemma \ref{basis},
 a pair $\tilde {\mathcal A} =\{ [\tilde a_m , \tilde b_m] : m\in \mathbb N\}$ 
 and $\tilde {\mathcal C} = \{ [\tilde c_m , \tilde d_m ]  : m\in \mathbb N\}$
 so that there are disjoint subsets $N_K, N_{U}$ of $\mathbb N$ so that
 \begin{enumerate}
 \item for all $m\in \mathbb N$, $\tilde a_m < \tilde c_m < \tilde d_m < \tilde b_m$,
 \item for all $m\in \mathbb N$ there is a $k$ such that $[\tilde a_m, \tilde b_m]\subset
    [a_k,b_k]$,
    \item $K$ is contained in the $\beta\mathbb M$-closure of $\bigcup\{[\tilde c_m,\tilde d_m] : m\in 
       N_K\}$,
       \item $\mathbb M_{\mathcal A}^* \setminus U_n$ is contained 
        in  the $\beta\mathbb M$-closure of $\bigcup\{[\tilde c_m,\tilde d_m] : m\in 
       N_{U_n}\}$.
 \end{enumerate} 
 Let $\psi$ denote the function on $\mathbb M_{\mathcal A}$
 onto $\mathbb N$ obtained by sending every point of $[a_m,b_m]$ to $m$. 
 We may let $\psi^*$ then denote the continuous extension of $\psi$
 to $\mathbb M_{\mathcal A}^*$ onto $\mathbb N^*$.   
 
 For each $x\in K$ (a cut-point) with $\psi^*(x)=u \in \mathbb N^*$, there is 
 a set $L_x\subset N_K$ such that $\psi\restriction L_x$ is 1-to-1 
 and $x$ is in the $\beta\mathbb M$-closure of the union of
 the standard
 sequence $\tilde {\mathcal A}_{L_x} = \{[\tilde a_m , \tilde b_m ] : m\in L_x\}$. 
 The closure of the $\tilde{\mathcal A}_{L_x}$ meets $K$ in a clopen
 set and so, by possibly shrinking $L_x$ we can ensure that $x$ 
 is still in the closure and that this clopen subset of $K$ is equal
 to the intersection of $K$ with the closure of the union
 of the set $\{ [a_j , b_j ] : j\in \psi[L_x]\}$ (i.e. all the $\mathcal A$-intervals
 that are hit by $\tilde{\mathcal A}_{L_x}$). Since there is a finite cover
 of $K$ by such clopen sets (associated with the closures of
 these $\tilde{\mathcal A}_{L_x}$'s), this shows that there is a selector
 function $\sigma: \mathbb N \rightarrow \mathbb N$ so that
 \begin{enumerate}
 \item for each $m\in\mathbb N$, $[\tilde a_{\sigma(m)}, \tilde b_{\sigma(m)}]$
 is a subset of $[a_m,b_m]$,
 \item the closure of the union of the intervals in $\{
 [\tilde a_{\sigma(m)}, \tilde b_{\sigma(m)}]: m\in \mathbb N\}$ contains $K$
 in its interior,
 \item the closure of the union of the intervals in $\{
 [\tilde a_{\sigma(m)}, \tilde b_{\sigma(m)}]: m\in \mathbb N\}$ is contained in $U_n$.
 \end{enumerate}

\noindent
Choose any function
 $f\in \mathbb N^{\mathbb N}$ so that
 for all but finitely many $m$, each of the distances 
  $\{ |\tilde a_{\sigma(m)}-\tilde c_{\sigma(m)}|, 
    |\tilde d_{\sigma(m)}-\tilde c_{\sigma(m)}|, 
   \{ |\tilde d_{\sigma(m)}-\tilde b_{\sigma(m)}|\}$ are greater 
   than $5/f(m)$.  For every $x\in K$, applying the definition of
   cut-point, choose an $f$-thin standard sequence $\mathcal C_x
    = \{ [c^x_m , d^x_m] : m\in \mathbb N\}$ so that
     $x \in [c^x_m, d^x_m]_{u}$ where $u=\psi^*(x)$.  
     Again, we can choose $L_x\subset \mathbb N$ so
     that $L_x\in u$ and so that for all $y\in K$
     with $L_x\in \psi^*(y)$, we also have that
       $y$ is in the $\beta\mathbb M$ closure of the
       union of $\mathcal C_x$.  In fact $L_x$ is simply any set
       such that $L_x^* = \psi^*(K\cap \mbox{cl}_{\beta\mathbb M}
       \left(\bigcup\{ [c^x_m, d^x_m] : m\in \mathbb N\}\right))$.
       If needed, we can make ${<}1/f(n)$ changes to any of the $c^x_m$'s and
  $d^x_m$'s so as to ensure every $y\in K$ with $L_x\in \psi^*(y)$, 
  is in the interior of 
  $\mbox{cl}_{\beta\mathbb M}
       \left(\bigcup\{ [c^x_m, d^x_m] : m\in \mathbb N\}\right)$.
Now consider the sequence 
  $\{
 [\tilde a_{\sigma(m)}, \tilde b_{\sigma(m)}]: m\in \mathbb N\}$.
  It follows that, for all but finitely many $m\in L_x$,
  the interval $[c^x_m,d^x_m]$ meets the interval $[\tilde c_{\sigma(m)},
  \tilde d_{\sigma(m)}]$. By the choice of $f$, we then have that, 
  for all but finitely many $m\in L_x$, the interval $[c^x_m,d^x_m]$
  is contained in $[\tilde a_{\sigma(m)}, \tilde b_{\sigma(m)}]$. 
  This shows that  
  $ 
       \left(\bigcup\{ [c^x_m, d^x_m] : m\in \mathbb N\}\right)^*$
       is contained in $U$ and 
       is a neighborhood of a relatively clopen set of points of $K$.
       By the compactness of $K$ there is an $f$-thin standard
       sequence $\mathcal C_f = \{ [c^f_m, d^f_m] \subset
        [a_m,b_m] : m\in \mathbb N\}$
       such that $\left(\bigcup\mathcal C_f\right)^*$ is contained in $U$. 
       
        Now we prove that the family
         is ${<}\mathfrak b$-directed. Now
         let $F\subset \mathbb N^{\mathbb N}$ be any 
         set with $|F|<\mathfrak b$.   For each $f\in F$,
         choose also $g_f\in \mathbb N^{\mathbb N}$ so
         that $f<g_f$ and for all $m\in \mathbb N$,
           $[c^m_{g_f},d^m_{g_f}]\subset  (c^m_f,d^m_f)$ (
           using that the
           closure of $D = \{ c^m_f, d^m_f : m\in \mathbb N\}$ 
           in $\beta\mathbb M$ is disjoint from $K$. 
           For each $f\in F$, let $h_f \in \mathbb N^{\mathbb N}$
            satisfy that, for all $m\in \mathbb N$, 
              $5/h_f(m)$ is less than each of the values
               $ c^m_{g_f}-c^m_f$, $d^m_f-d^m_{g_f}$, and 
                $ d^m_{g_f}-c^m_{g_f}$. 
           Choose 
           $\bar f\in \mathbb N^{\mathbb N}$ so that for all $f\in F$,
          $h_f <^* \bar f$.  Fix any $f\in F$, and note that
          there is some $m_f$ so that, for all $m>m_f$,
           we have that $[c^m_{\bar f},d^m_{\bar f}]\cap
             [c^m_{g_f},d^m_{g_f}]$ is not empty
             and that $h_f(m) < f(m)$.
             Consider any $m>m_f$ and the required
             condition: $c^m_{g_f}\leq d^m_{\bar f}$
             or $c^m_{\bar f}\leq d^m_{g_f}$. 
             In the first case we then have
              $$c^m_f< c^m_{g_f}-\frac1{\bar f(m)} < d^m_{\bar f}
               - \frac1{\bar f(m)} < c^m_{\bar f}< d^m_{\bar f}  < 
                c^m_{\bar f} + \frac1{\bar f(m)} 
                  < c^m_{g_f}+\frac1{g_f(m)} <d^m_{g_f}< d^m_{f}$$
                  which implies that $[c^m_{\bar f},d^m_{\bar f}]
                  \subset (c^m_f,d^m_f)$. 
                  The case when $c^m_f \leq d^m_{g_f}$ also
                  implies that  $[c^m_{\bar f},d^m_{\bar f}]
                  \subset (c^m_f,d^m_f)$ by a similar argument.         
\end{proof}

\begin{definition} Say that an\label{gap} indexed family
 $\{\mathcal C_f : f\in\mathbb N^{\mathbb N}\}$ is an
  $\mathbb N^*$ cut-set of $\mathbb M^*$ 
  \begin{enumerate}
  \item every $\mathcal C_f$ is a rational standard sequence indexed as
   $\{ [c^m_f, d^m_f] : m\in \mathbb N\}$,
   \item the sequence $\mathcal C_f$ is $f$-thin,
   \item the family is countably directed in the sense that, 
   for each family $\{ f_n : n\in \omega\}\subset \mathbb N^{\mathbb N}$,
    there is an $f\in \mathbb N$ such that, for all $n\in\omega$,
     $f<^*f_n$ and $\{ m : [c^m_f,d^m_f]\not\subset (c^m_{f_n},d^m_{f_n})\}$
     is finite.
  \end{enumerate}

Given any $\mathcal C_f$ and subset $I$ of $\mathbb N$,
 let $\mathcal C_f\restriction I =\{[c^m_f,d^m_f] : m\in I\}$. 
For an $\mathbb N^*$ cut-set family $
\mathfrak C = \{\mathcal C_f : f\in \mathbb N^{\mathbb N}\}$,
and a subset $I\subset\mathbb N$, 
 let $\mathfrak C\restriction I$ denote the 
 (re-indexed) $\mathbb N^*$ cut-set $\{ \mathcal C_f\restriction I : f
 \in\mathbb N^{\mathbb N}\}$. 
 Let $\mbox{Triv}(\mathfrak C)$ denote the ideal of subsets
 $I$ of $\mathbb N$ satisfying that the 
family $\mathfrak C\restriction I$ is trivial.
\end{definition}

\section{The effects of two consequences of PFA}

In this section we work with two assumptions
$(\dagger_1^+)$ and $(\dagger_2^+)$, each
of which is a consequence of PFA.
In fact, $(\dagger_2^+)$ is a consequence of
the open graph axiom (formerly OCA).
Let $H$ be an autohomeomorphism  of $\mathbb N^*$.
For a  subset $a$ of $\mathbb N$,
 $H$ is said to be trivial on $a^*$ if
 (by possibly removing a finite subset of $a$) 
 there is a bijection $h_a$ from $a$ into $\mathbb N$
 such that $H(c^*) = \left(h_a(c)\right)^*$
 for all $c\subset a$. 
  The family,
 $\mbox{Triv}(H) = \{ a\subset \mathbb N: 
 H \ \mbox{is trivial on }\ a^*\}$,
   is an ideal on $\mathcal P(\mathbb N)$.
 An ideal $\mathcal I$ on a countable set 
  $D$ is said to be ccc over fin, if
  given any uncountable family
  of pairwise almost disjoint subsets of $D$,
   all but countably many of them are
   in $\mathcal I$.

   Thus one says that an autohomeomorphism $H$ of
   $\mathbb N^*$  (or of $D^*$ for any countable 
   discrete set $D$)
   is trivial modulo ccc over fin,
   if the ideal $\mbox{Triv}(H)$ is ccc over fin.
   Similarly $H$ is somewhere trivial if
   there is some infinite set $I\in  \mbox{Triv}(H)$.

\begin{definition}
The statement $(\dagger_1^+)$ is the assertion
that every autohomeomorphism of $\mathbb N^*$
is trivial. The statement $(\dagger_1)$ is
the statement that every such autohomeomorphism
is trivial  modulo ccc over  fin. 
While we are at it, let $(\dagger_1^-)$ be 
the statement that every autohomeomorphism
of $\mathbb N^*$ is somewhere trivial.
 \medskip

The statement $(\dagger_2^+)$ is the assertion
that every $\mathbb N^*$ cut-set of $\mathbb M^*$
is trivial, and again $(\dagger_2)$ is 
the statement that every $\mathbb N^*$
cut-set of $\mathbb M^*$ is trivial
 on an ideal that is ccc over fin.
\end{definition}

\begin{lemma}  $(\dagger_2^+)$ holds\label{firstOGA}
if $\mathfrak c=\aleph_2$ and the
 open graph axiom holds. 
\end{lemma}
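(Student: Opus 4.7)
The plan is to show that any $\mathbb{N}^*$ cut-set $K$ of $\mathbb{M}^*$ must equal $D^*$ for some closed discrete $D \subset \mathbb{M}$. Fix $K$ witnessed by $\mathcal{A} = \{[a_m,b_m] : m \in \mathbb{N}\}$, and let $\{\mathcal{C}_f : f \in \mathbb{N}^{\mathbb{N}}\}$ with $\mathcal{C}_f = \{[c^f_m, d^f_m] : m \in \mathbb{N}\}$ be the ${<}\mathfrak{b}$-directed neighborhood base of $K$ in $\mathbb{M}^*_{\mathcal{A}}$ furnished by Proposition \ref{Psets}. Under OGA one has $\mathfrak{b} = \aleph_2 = \mathfrak{c}$. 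It suffices to produce a selector $(x_m)_m$ with $x_m \in [a_m,b_m]$ such that, for every $f$, $x_m \in [c^f_m, d^f_m]$ for cofinitely many $m$: then $D = \{(m,x_m) : m \in \mathbb{N}\}$ is closed discrete in $\mathbb{M}$, the set $D^*$ is contained in every neighborhood of $K$ and hence in $K$, and since both $D^*$ and $K$ project bijectively onto $\mathbb{N}^*$ via $\psi^*$ we conclude $K = D^*$.

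The construction I have in mind is a transfinite recursion of length $\omega_2$. Enumerate $\mathbb{N}^{\mathbb{N}} = \{f_\alpha : \alpha < \omega_2\}$ and build a coherent system $\{s_\alpha : \alpha < \omega_2\}$ of rational selectors $s_\alpha : \mathbb{N} \to \mathbb{Q}$ with $s_\alpha(m) \in [c^{f_\alpha}_m, d^{f_\alpha}_m]$ cofinitely and with $s_\alpha =^* s_\beta$ whenever $\beta < \alpha$. Successor stages and limit stages of countable cofinality are immediate from the ${<}\mathfrak{b}$-directedness of the base; once the whole system is built, a final diagonal choice (using $\mathfrak{b} = \aleph_2$ to handle all of $\mathbb{N}^{\mathbb{N}}$) delivers the desired $(x_m)$. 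The delicate step is the limit $\alpha$ of cofinality $\omega_1$, where one must merge uncountably many pairwise-coherent rational selectors $\{s_\beta : \beta < \alpha\}$ into a single $s_\alpha \in \mathcal{C}_{f_\alpha}$ still coherent with all of them.

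This merging is where OGA enters. I would define an open graph on the set $\Sigma_\alpha$ of rational selectors eventually living in $\mathcal{C}_{f_\alpha}$, declaring $s \neq s'$ adjacent when there exists $\beta < \alpha$ such that $\{m : s(m) = s_\beta(m) \neq s'(m)\}$ is infinite, which is an open condition in the natural product topology on $\Sigma_\alpha$. The first alternative of OGA produces an uncountable clique; a pigeonhole on the witnessing $\beta$'s, together with the coherence of the $s_\beta$'s, yields an $s_\alpha$ almost equal to every $s_\beta$ and lying in $\mathcal{C}_{f_\alpha}$. The second alternative, a countable anticlique cover of $\Sigma_\alpha$, I expect to rule out using the ${<}\mathfrak{b}$-directedness of the base: a diagonal selector escaping every class would have its ultralimits defining, at some $u \in \mathbb{N}^*$, a cut-point of $[a_m,b_m]_u$ lying in $K$ yet distinct from the ultralimit of each $s_\beta$, contradicting the $1$-to-$1$ condition in the definition of $\mathbb{N}^*$ cut-set.

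The main obstacle is the precise design of the open graph and the verification that the second OGA alternative leads to the claimed contradiction. As noted in the text after Proposition \ref{Psets}, the neighborhood filter of $K$ in $\beta\mathbb{M}$ is a maximal nice filter on $\mathbb{N} \times [0,1]$, and this is exactly the setting in which \cite{DowPAMS} carries out OGA${}+{}$$\mathfrak{c} = \aleph_2$ trivializations. My plan therefore ultimately reduces to invoking (or specializing) the maximal-nice-filter machinery of \cite{DowPAMS} to trivialize the filter, which then translates directly into the triviality of $K$ via the selector argument above.
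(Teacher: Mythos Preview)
Your proposal has a genuine gap at its core: the relation you define on $\Sigma_\alpha$ is \emph{not} open. In the product topology on $\mathbb{Q}^{\mathbb N}$ (discrete on each factor), basic open sets are determined by finitely many coordinates, but your adjacency condition ``there exists $\beta<\alpha$ with $\{m : s(m)=s_\beta(m)\neq s'(m)\}$ infinite'' is, for each fixed $\beta$, a $G_\delta$ condition (it is $\bigcap_N\bigcup_{m\ge N}\{\,s(m)=s_\beta(m)\neq s'(m)\,\}$), and a countable union of $G_\delta$ sets need not be open. So OGA does not apply to this graph, and your treatment of the cofinality-$\omega_1$ limit stage collapses. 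Your closing paragraph effectively concedes this by deferring the real work to \cite{DowPAMS}; the sketches for both OGA alternatives (``pigeonhole on the witnessing $\beta$'s'' and ``a diagonal selector escaping every class'') are not arguments as they stand.

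The paper's proof is far more direct and avoids any transfinite recursion. It applies OGA \emph{once}, to a dominating family $F=\{f_\gamma:\gamma<\omega_2\}\subset\mathbb N^{\mathbb N}$ chosen so that $\{\mathcal C_{f_\gamma}\}$ is directed and cofinal in the base (possible since $\mathfrak b=\mathfrak c=\aleph_2$). Identify each $f\in F$ with the sequence $\mathcal C_f\in(\mathbb Q^2)^{\mathbb N}$ and declare $\{f,f'\}\in R$ when there is a \emph{single} $m$ with $[c^m_f,d^m_f]\cap[c^m_{f'},d^m_{f'}]=\emptyset$; this is visibly open. An uncountable $R$-clique is ruled out by the directedness of the base: pick $f$ with $\mathcal C_f$ below all $\mathcal C_{f_\alpha}$ mod finite, then freeze finitely many coordinates to get two members of the clique whose intervals pairwise meet at every $m$. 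OGA then yields a countable cover $\{F_n\}$ by $R$-independent sets; some $F_n$ is dominating, and for that $n$ the family $\{[c^m_f,d^m_f]:f\in F_n\}$ is linked for every $m$, so a common point $x_m$ exists. The resulting selector lies in every $\bigcup\mathcal C_f$ with $f\in F_n$, hence (by cofinality) in every member of the base, giving $K=\{x_m:m\in\mathbb N\}^*$. This is the argument you should aim for: put the open graph on the \emph{index family} with a finitary disjointness relation, not on selectors with an infinitary coherence relation.
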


\begin{proof}
Let $\{\mathcal C_f : f\in \mathbb N^{\mathbb N}\}$ be  an
 $\mathbb N^*$ cut-set of $\mathbb M^*$.
 Using that the open graph axiom implies that $\mathfrak b=\aleph_2$
 and we are assuming $\mathfrak c = \aleph_2$, we
 can recursively 
  choose
  a dominating family $\{ f_\gamma : \gamma\in \omega_2\}
 \subset \mathbb N^{\mathbb N}$ 
 so that, by possibly making finite modifcations
 to each, the family 
  $\{ \mathcal C_{f_\gamma}  :\gamma\in \omega_2\}$
  is directed as in Proposition \ref{Psets} and
 is cofinal in the family $\{ \mathcal C_f : f\in \mathbb N^{\mathbb N}\}$.
  Let $F$ denote the family $\{f_\gamma : \gamma\in \omega_2\}$
  without the indexing.

   Let $R$ be the set of pairs $(f,f')\in [F]^2$
   such that there is an $m\in \mathbb N$ such that 
    $[c^m_f,d^m_f]\cap [c^m_{f'},d^m_{f'}]$ is empty.
     If we identify each $f\in F$ 
    with the corresponding sequence 
     $\mathcal C_f$ as an element of
      $\left(\mathbb Q^2\right)^{\mathbb N}$
      viewed as a product of discrete spaces,
      then the relation $R$ is open in the resulting metric space.
      Assume that $\{ f_\alpha : \alpha\in\omega_1\}\subset
      F$  satisfies
      that $(f_\alpha,f_\beta)\in R$ for all $\alpha\neq \beta\in \omega_1$.
      We may choose
      an $f\in F$ so that,
      for all $\alpha <\omega_1$, there is an $m_\alpha\in \mathbb N$
      so that $[c^m_f,d^m_f]\subset (c^m_{f_\alpha},d^m_{f_\alpha})$
      for all $m>m_\alpha$. 
       Fix an $\bar m\in\mathbb N$ and an uncountable
       $\Lambda\subset\omega_1$ so that $m_\alpha=\bar m$ 
       for all $\alpha\in\Lambda$. Also choose uncountable
         $\Lambda_1\subset \Lambda$ so
         that, for all $\alpha,\beta\in \Lambda_1$
         and $m\leq \bar m$, 
          $[c^m_{f_\alpha},d^m_{f_\alpha}]
          =
 [c^m_{f_\beta},d^m_{f_\beta}]$. We now have a contradiction since,
 for all $\alpha\neq\beta\in \Lambda_1$, 
 $[c^m_{f_\alpha},d^m_{f_\alpha}]\cap
[c^m_{f_\beta},d^m_{f_\beta}]$ is not empty (contradicting 
the pair $(f_\alpha,f_\beta)$ is supposed to be in $R$).

By the OGA, it follows there must be a cover of $F $
by a
countable collection 
 $\{ F_n : n\in \omega\}$
 satisfying that $[F_n]^2  $ is disjoint from $R$ for all
  $n\in\omega$. Choose any $n$ so that $F_n$ is a $<^*$-dominating
  subfamily of $\mathbb N^{\mathbb N}$. 
  By our construction, this also ensures that
   $\{ \mathcal C_f : f\in F_n\}$ is cofinal in 
    the original family $\{\mathcal C_f : f\in \mathbb N^{\mathbb N}\}$
    in the sense of Proposition \ref{Psets}.
  Fix any $m\in \mathbb N$
   and observe that the family $\{ [c^m_f,d^m_f] : f\in F_n\}$
   is linked. Of course this means we can choose  a sequence
    $\{ x_m : m\in \mathbb N\}$ satisfying
    that $x_m\in [c^m_f,d^m_f]$ for all $f\in F_n$
    and this contradicts that there should be some $f\in F_n$
    satisfying that the $\beta \mathbb M$-closure of $\{x_m\}_{m\in\mathbb N}$
    should be disjoint from
     the $\beta \mathbb M$-closure of $\bigcup\mathcal C_f$.
    
\end{proof}

\begin{theorem}[Assume $(\dagger_1)$ and $(\dagger_2)$]
Every autohomeomorphism\label{Histrivial}
 $\Psi$ on $\mathbb M^*$
induces a trivial autohomeomorphism on $\mathbb N^*$.
\end{theorem}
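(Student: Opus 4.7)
The plan is to use Theorem \ref{new1} to read off from $\Psi$ a natural $\mathbb N^*$ cut-set $K$, apply $(\dagger_2)$ to trivialize $K$ on a ccc-over-fin ideal, translate this into triviality data for $H := H_\Psi$, and then glue along an MAD family in $\mathrm{Triv}(H)$ (which is ccc over fin by $(\dagger_1)$). Concretely, take the canonical standard sequence $\mathcal A_0 = \{\{m\}\times [0,1] : m\in \mathbb N\}$ with selector $x_m = (m,\tfrac12)$, and set $D = \{x_m\}$. Then $D$ is closed discrete in $\mathbb M$, $D^*$ is a trivial $\mathbb N^*$ cut-set, and $x_u \leftrightarrow u$ identifies $D^*$ with $\mathbb N^*$. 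Theorem \ref{new1} produces a standard sequence $\{[c_j,d_j] : j \in \mathbb N\}$ and an autohomeomorphism $h$ of $\mathbb N^*$ with $\Psi(x_u) \in [c_j,d_j]_{h(u)}$ a cut-point of $\Psi([a_m,b_m]_u)$. Uniqueness of the induced homeomorphism forces $h = H$, and then $K := \Psi(D^*)$ is an $\mathbb N^*$ cut-set for $\{[c_j,d_j]\}$, with indexing $\Psi(x_u) \mapsto H(u)$.

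Applying $(\dagger_2)$ to $K$ via the cut-set family $\mathfrak C$ of Proposition \ref{Psets}, the ideal $\mathrm{Triv}(\mathfrak C)$ on the $c_j$-side is ccc over fin. For each $J \in \mathrm{Triv}(\mathfrak C)$, there is a closed discrete $E_J \subset \mathbb M$ with $K$ restricted to the fibers above $J^*$ equal to $E_J^*$. Because $K$ is one-to-one over $\mathbb N^*$, the first-coordinate projection $p_J \colon E_J \to \mathbb N$ is one-to-one mod finite, so $E_J$ is essentially a selector for $\{[c_j,d_j] : j \in J\}$. Matching each $x_m$ against its corresponding point of $E_J$ via $\Psi$ yields an explicit bijection mod finite from $H^{-1}(J)$ to $p_J(E_J)$ implementing $H\restriction H^{-1}(J)^*$; hence $H^{-1}(J) \in \mathrm{Triv}(H)$, and so $H^{-1}(\mathrm{Triv}(\mathfrak C)) \subseteq \mathrm{Triv}(H)$.

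By $(\dagger_1)$, $\mathrm{Triv}(H)$ is ccc over fin, so the intersection $\mathrm{Triv}(H) \cap H^{-1}(\mathrm{Triv}(\mathfrak C))$ is ccc over fin as well. Choose an MAD family $\{A_\alpha\}$ inside this intersection. Density of ccc-over-fin ideals (every infinite set contains an infinite ideal element, by the standard $\aleph_1$-sized AD family argument) forces $\mathbb N \setminus \bigcup_\alpha A_\alpha$ to be finite. The bijections $h_{A_\alpha}$ agree mod finite on the (finite) pairwise intersections, and so assemble into a partial function $\bar h$ on a cofinite subset of $\mathbb N$, which we extend arbitrarily to a bijection of $\mathbb N$.

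The main obstacle is to verify $H(c^*) = \bar h(c)^*$ for \emph{every} $c \subset \mathbb N$, and not merely for $c$ concentrated on a single MAD piece. An ultrafilter $u \in c^*$ may escape each $A_\alpha$ (the MAD family being uncountable), and for such diagonal $u$ one must rule out $H(u) \neq \bar h^*(u)$. This is where the cut-set structure is essential and not just the ideal structure: continuity of $\Psi$ places $\Psi(x_u)$ inside the compact set $K$, and the coherence of the trivializations $E_{J_\alpha}$ across the pulled-back MAD family in $\mathrm{Triv}(\mathfrak C)$, together with the one-to-one-over-$\mathbb N^*$ property of $K$, pins down $\pi(\Psi(x_u)) = \bar h^*(u)$. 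Any discrepancy at an escape ultrafilter would produce a standard subsequence witnessing a non-trivial restriction of the cut-set family, contradicting $(\dagger_2)$. This escape-ultrafilter analysis is the technical heart; once complete, $\bar h$ is the required bijection witnessing that $H$ is trivial on $\mathbb N^*$.
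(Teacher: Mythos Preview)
Your proposal has a genuine gap at the gluing step, and the paper's proof proceeds along entirely different lines.

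\medskip

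\textbf{The gap.} You correctly observe that the ``escape ultrafilter'' analysis is the technical heart, but you do not carry it out, and there is no evident way to do so with the tools available. Concretely: having $H$ trivial on every member of a MAD family $\{A_\alpha\}$ inside $\mathrm{Triv}(H)$ does \emph{not} in general imply that $H$ is globally trivial. This is precisely the gap between $(\dagger_1)$ and $(\dagger_1^+)$, and bridging it is the whole content of the theorem. Even defining your $\bar h$ is problematic: the statement that the $h_{A_\alpha}$ ``agree mod finite on the (finite) pairwise intersections'' is vacuous, since any two functions agree mod finite on a finite set; a single $n$ may lie in infinitely many $A_\alpha$'s with conflicting values of $h_{A_\alpha}(n)$, so there is no canonical $\bar h$. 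Your appeal to ``the cut-set structure'' at an escape ultrafilter $u$ does not help: you already spent $(\dagger_2)$ to obtain the ccc-over-fin ideal, and the assertion that a discrepancy at $u$ would produce a new non-trivial restriction of $\mathfrak C$ is unjustified---$(\dagger_2)$ permits plenty of non-trivial restrictions provided the trivial ones form a ccc-over-fin ideal. There is also a smaller gap earlier: from $E_J^* = K\restriction J^*$ you claim an ``explicit bijection'' $H^{-1}(J)\to p_J(E_J)$; but $\Psi$ is only a homeomorphism of remainders, so this again requires an application of $(\dagger_1)$ (and hence a further shrinking of $J$) that you do not perform.

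\medskip

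\textbf{What the paper does instead.} The paper never attempts to build a global trivializing bijection. It argues by contradiction via an \emph{oscillation count}. One fixes a continuous $g:\mathbb M\to[0,1]$ whose restriction to $\{n\}\times[0,1]$ oscillates between $0$ and $1$ exactly $n$ times, takes $g_1$ with $g_1^* = g^*\circ\Psi^{-1}$, and lets $L_n$ be the oscillation count of $g_1$ on $\{n\}\times[0,1]$. If $H$ is non-trivial, a short case analysis on the map $m\mapsto L_m/2$ (using only that $\mathrm{Triv}(H)$ is dense, a consequence of $(\dagger_1)$) produces an infinite $J\in\mathrm{Triv}(H)$ on which $L_{h_J(n)}<n$. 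One then applies $(\dagger_2)$ and $(\dagger_1)$ \emph{locally}, to a single infinite $I\subset J$, to trivialize $\Psi$ on the discrete set of critical points of $g$ over $I$. The resulting bijection of critical points forces $g_1$ to oscillate at least $n$ times on $\{h_I(n)\}\times[0,1]$, contradicting $L_{h_I(n)}<n$. The point is that $(\dagger_1)$ and $(\dagger_2)$ are used only to obtain triviality on \emph{one} carefully chosen infinite set, and the oscillation invariant supplies the contradiction; no gluing is needed.
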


\begin{proof} 
Assume that $\Psi$ is an autohomeomorphism of $\mathbb M$. 
For simplicity assume that $\Psi(\,(\mathbb N\times \{0\})^*\,)
 = (\mathbb N\times \{0\})^*$.
 Let $H$ denote the autohomeomorphism on $\mathbb N^*$
 induced by $\Psi$, hence $H\circ \pi = \pi\circ \Psi$. 
 \medskip
 
Start with the continuous function $g: \mathbb M\rightarrow [0,1]$
satisfying that, for each $n\in \mathbb N$,  
\begin{enumerate}
\item 
 $g^{-1}(0)\cap (\{n\}\times [0,1]) = \{n\}\times \{ \frac in : 0\leq i\leq n\}$,
 \item
  $g^{-1}(1)\cap (\{n\}\times [0,1]) = \{ n\}\times \{ 
   \frac {2i+1}{2n} : 0\leq i < n\}$,
   \item  $g\restriction \left(\{n\}\times [\frac{i}{2n},\frac{i+1}{2n}]\right)$ is linear
   for all $0\leq i <2n$.
   \end{enumerate}
   Simply the graph of $g\restriction (\{n\}\times [0,1])$
   oscillates  $0$ to  $1$ and back to $0$ exactly $n$ times.
   Let $D=\{ (n,q) : n\in \mathbb N, q\in \{ \frac{i}{2n} : 0< i < 2n\}$.
   Of course $D$ is a closed discrete subset of $\mathbb M$.
   For an infinite subset $J$ of $\mathbb N$,
   let $D_J = D \cap \left(J\times [0,1]\right)$.
\medskip

    Let $g_1$ be a continuous map on $\mathbb M$ satisfying
    that $g_1^* = g^*\circ \Psi^{-1}$. For each $n\in \mathbb N$,
    let us count, call it $L_n$, the number of times that
     $g_1$ oscillates as follows. Fix any $n\in \mathbb N$
     and set $t^n_0 = 0$ and define (if it exists)
      $t^n_1 = \inf\{ r : g_1(n,r) = \frac23\}$
       (we think of $g_1$ as ``on its way towards 1''). 
       Next, we define $t^n_2$ to be 
       $\inf\{ r \in [t^n_1,1] : g_1(n,r) = \frac13\}$
       (i.e. ``$g_1$ is on its way back towards 0''). 
       Continue recursively defining this increasing
       sequence $t^n_0,t^n_1, \ldots, t^n_{L_n}$ 
       so that $g_1(n,t^n_i)=\frac13$ for even $i\leq L_n$
       and $g_1(n,t^n_{i})=\frac23$ for odd $i\leq L_n$. 
       Also, $g_1\restriction\left(\{n\}\times
        [t^n_i, t^n_{i+1}]\right)$ is
        either contained in $[0,\frac23]$ or
        in $[\frac13,1]$. 
For each $0\leq i < L_n$ also choose $s^n_i\in [t^n_i,t^n_{i+1}]$
so that, if $i$ is even, $g_1((n,s^n_i))=\min(g_1(\{n\}\times
[t^n_i,t^n_{i+1}])$, and, if $i$ is odd,
 $g_1((n,s^n_i))=\max(g_1(\{n\}\times
[t^n_i,t^n_{i+1}])$. 
It follows from the fact that $g_1^* = g^*\circ \Psi$,
that if $0 \leq i_n < L_n$ ($n\in \mathbb N$)
is a sequence with each $i_n$ even, 
then $\limsup\{ g_1((n,s^n_{i_n})) : n\in \mathbb N\} = 0$.
Similarly the limit would be $1$ for a sequence of odd
values for $i_n$. It also follows similarly,
 that the sequence $\{ L_n : n\in \mathbb N\}$
 is, mod finite, a diverging sequence of even numbers.
  For convenience (by possibly redefining finitely
  many) we assume $L_n$ is even for all $n\in \mathbb N$.

\bigskip

 Let $\sigma$ be the function in ${}^{\mathbb N}\,\mathbb N$
 defined by $\sigma(m)=L_m/2$ (it will help to
 change letters). 
Let us assume that $H$ is not trivial
 (meaning $\mathbb N$ is not in $\mbox{Triv}(H)$).
 We break the rest
 of the proof into cases based on properties
 of $\sigma$. 
 In each case we prove that there is
 an infinite $a\in \mbox{Triv}(H)$
 satisfying that $L_{h_a(n)}\neq n$
 for all $n\in a$. Appealing to symmetry
 we complete the proof in the case
 that $L_{h_a(n)}<n$ for all $n\in a$.
 Note that $(\dagger_1)$ implies that every
 infinite $b\subset \mathbb N$ has an infinite
 subset that is in $\mbox{Triv}(H)$.

 First case is when 
 $\sigma$ is, mod finite,
 a permutation on $\mathbb N$.  
 Since $\mathbb N\notin \mbox{Triv}(H)$,
  the almost permutation $\sigma^{-1}$ does
  not induce $H$. There is an infinite
   $b\subset \mathbb N$ such that 
   $\left(\sigma^{-1}(b)\right)^*$
   and $H(b^*)$ are distinct.  By
   taking complements if needed,
   we can assume that $\left(\sigma^{-1}(b)\right)^*$
   is not contained in $H(b^*)$.
   Then choose an infinite $J\subset b$
   so that $\left(\sigma^{-1}(J)\right)^*$
   is disjoint from $H(b^*)\supset
   H(J^*)$.  
Now by $(\dagger_1)$ we can assume that
 $J\in \mbox{Triv}(H)$ and choose
the injection $h_J : J \rightarrow \mathbb N$
witnessing that $a\in \mbox{Triv}(H)$.
It follows that, for every $n\in J$,
  $L_{h_J(n)} \neq n$.    

  \bigskip

  Next case is that there is some infinite 
   $b\subset \mathbb N$ such that
    $b$ is disjoint from $\{ L_m : m\in \mathbb N\}$.
    Again choose an infinite $J\subset b$
    in $\mbox{Triv}(H)$ and we again have
    $L_{h_J(n)}\neq n$ for all $n\in J$. 
     
    \bigskip

    The final case is that $\sigma$ is not 
     1-to-1.  Choose any disjoint pair $b_0, b_1$
     of infinite subset of $\mathbb N$
     satisfying that $\sigma$ is 1-to-1 on each
     while $\sigma[b_0] = \sigma[b_1]$. 
     First choose an infinite $a_0\subset b_0$
     with $a_0\in \mbox{Triv}(H)$. 
     Next choose $a_1\subset b_1$ so that
      $a_1\in\mbox{Triv}(H)$ and
       $\sigma[a_1]\subset \sigma[a_0]$. 
       Shrink $a_0$ so that $\sigma[a_0]=\sigma[a_1]$.
We may choose $J\in \mbox{Triv}(H)$ to be a subset of one of $a_0,a_1$ so that
 again $L_{h_a(n)}\neq n$ for all $n\in a$.

   \bigskip

Now we continue the proof by assuming that $J$
is an infinite set in $\mbox{Triv}(H)$ satisfying
that $L_{h_J(n)} < n$ for all $n\in J$. 
\medskip

   Choose a standard sequence (indexed by $D$) $\{
   [a_d , b_d ] : d\in D\}$ so that $d\in (a_d,b_d)$
   for all $d\in D$. Of course $D$ is a selector set
   for the standard sequence $\mathcal A 
    = \{ [a_d, b_d] : d\in D\}$ and
   $D^*$ is an $\mathbb N^*$
   cut-set for $\mathcal A$.  We pass to the subset
   $D_J = D\cap \left(J\times[0,1]\right)$
   and $\mathcal A_J = \{ [a_d, b_d] : d\in D_J\}$. 
   More generally, for $I\subset J$, 
   let $\mathcal A_I = \{ [a_d , b_d] : d\in D_I\}$. 
   
   By considering an uncountable almost disjoint family
   of infinite subsets of $J$ and using Theorem \ref{new1}
   we have that 
   $(\dagger_2)$ implies that
   the $\mathbb N^*$ cut-set $\Psi(D_J^*)$ 
   is trivial on an ideal that is ccc over fin,
    there is an infinite $I\subset J$ so that 
    $\Psi(D_I^*) = K$  
     is a trivial   $\mathbb N^*$ cut-set
    with respect to the homeomorphic copy of
     $\mathbb M^*$ we get from $\left( H(I)\times [0,1]\right)^*$. 
    Therefore we may choose a closed discrete $E\subset 
 H(I)\times [0,1]$  so that $E^* = K$. Notice
    that $\Psi\restriction D_I^*$
    is an homeomorphism from $D_I^*$ to $E^*$. 
Again, by using $(\dagger_1)$  and by possibly 
further shrinking $I$ in the same manner that we shrunk
$J$ to obtain $I$, we  can assume that  $\Psi
     \restriction\left(D_I^*\cap \mathbb M_I^*\right)$
     is trivial.  That means there is a
     (mod finite) bijection 
     $f: D_I \rightarrow E\cap \mathbb M_{H(I)}$
     inducing $\Psi$ on $D_I^*\cap \mathbb M_I^*$.  We omit the easy verification
     that, by removing a finite set from $I$, we have that
      for $d_1 < d_2 \in D\cap \left(\{n\}\times [0,1]\right)$ ($n\in I$), 
      $f(d_1)<f(d_2) \in E\cap \left(\{h_I(n)\}\times [0,1]\right)$.
    
 \bigskip

 Fix any $n\in I$, and for each $0\leq i \leq 2n$,
  let $e_{i,n} = f((n,\frac{i}{2n}))$ (noting
  that $(n,\frac{i}{2n})\in D_I$). 

\begin{claim} For all but finitely many $n\in b$,
 \begin{enumerate}
\item $g_1(e_{i,n}) < \frac13$ for all even $i\leq 2n$, and
\item $g_1(e_{i,n}) > \frac23$ for all odd $i<2n$.
 \end{enumerate}
\end{claim}

Indeed, $g^*$ will send every point of
  $\left( \{ (n,\frac{2i}{2n}) : n\in b, i\leq n\}\right)^*$ to
   $0$, and so $g_1^*$ must send every point
    of $\left( \{f
((n,\frac{2i}{2n})) = e_{2i,n} : n\in b, i\leq n\}\right)^*
   $ to $0$. The analogous property holds 
   for the set of   $e_{2i+1,n}$ ($n\in b$, 
    $i<n$). It thus follows that $g_1\restriction \left(
     h_a(n) \times [0,1]\right)$ must oscillate at 
     least $n$ times, contradicting that
     the oscillation number $L_{h_a(n)}$ is less than $ n$.
\end{proof}

\section{PFA}

In this section, in working towards our main result,
we give an alternate proof of
Vignati's (\cite{Vignati}) theorem that PFA implies that
every autohomeomorphism of $\mathbb M^*$ is trivial.
We will
 work with 
 $(\dagger_1^+)$ and $(\dagger_2^+)$ 
  and we  also assume the principle defined next.

 \begin{definition}
 Say that $\mathcal H$ is an $\omega^\omega$-family 
 if $\mathcal H = \{ h_f : f \in \omega^\omega\}$ is a family
 of functions satisfying simply that $\dom(h_f) =  \{ (n,m) \in \omega^2 
  :   m < f(n) \}$. We then say that such a family $\mathcal H$
  is coherent, if whenever $f <^* g$ are in $\omega^\omega$,
   then $\{ (n,m) \in \dom(h_f) \cap \dom(h_g) : h_f((n,m))\neq
    h_g((n,m)) \}$ is finite. 
    \medskip
 
Say that the principle $\omega^\omega$-cohere  holds if
each $\omega^\omega$-family $\mathcal H$ that is coherent,
 there is a function $h$ with domain $\omega\times\omega$
 such that $\mathcal H\cup \{h\}$ is also coherent.
    \end{definition}
   
   The principle $\omega^\omega$-cohere (not so named)
   is a well-known consequence of OCA due
   to Todorcevic (see \cite{Farah2000}*{2.2.7}).  It is well-known (essentially 
   due to Hausdorff)
   that the principle $\omega^\omega$-cohere implies that $\mathfrak b > \omega_1$. 
   Todorcevic also proved the following Proposition.
   
\begin{proposition} If $\omega_1<\mathfrak b$, then for any
coherent
$\omega^\omega$-family $\mathcal H$, there is a countable set that
mod finite contains the range of each $h\in \mathcal H$.
\end{proposition}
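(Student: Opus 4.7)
The plan is to argue by contradiction: assume $\mathcal H=\{h_f:f\in\omega^\omega\}$ is a coherent family for which no countable set $C$ satisfies $\ran(h_f)\setminus C$ finite for every $f$, and derive an incompatibility with the upper-bound property supplied by $\omega_1<\mathfrak b$. First I would build, by recursion on $\alpha<\omega_1$, a $<^*$-increasing sequence $\{f_\alpha\}\subset\omega^\omega$ together with pairwise distinct $r_\alpha\in\ran(h_{f_\alpha})\setminus\bigcup_{\beta<\alpha}\ran(h_{f_\beta})$ and witnesses $(n_\alpha,m_\alpha)\in h_{f_\alpha}^{-1}(r_\alpha)$. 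At stage $\alpha$ the countable set $C_\alpha=\bigcup_{\beta<\alpha}\ran(h_{f_\beta})$ fails to cover some $h_g$ by hypothesis; taking $f_\alpha$ $<^*$-above $g$ and above all previous $f_\beta$, coherence yields $\ran(h_g)\subseteq^*\ran(h_{f_\alpha})$, so $r_\alpha$ can be harvested together with its witness.

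Then, applying $\omega_1<\mathfrak b$, I would pick an upper bound $f\in\omega^\omega$ of $\{f_\alpha:\alpha<\omega_1\}$. The range $\ran(h_f)$ is countable since $\dom(h_f)\subseteq\omega^2$, and for each $\alpha$ coherence forces $\ran(h_{f_\alpha})\setminus\ran(h_f)$ to be finite: any value of $h_{f_\alpha}$ missing from $\ran(h_f)$ must have all its preimages in the finite exceptional region $F_{\alpha,f}\cup(\dom(h_{f_\alpha})\setminus\dom(h_f))$, for otherwise coherence would place the value in $\ran(h_f)$. Because the $r_\alpha$ are pairwise distinct and $\ran(h_f)$ is countable, at most countably many $r_\alpha$ can belong to $\ran(h_f)$; hence on an uncountable $T\subseteq\omega_1$ each $r_\alpha$ lies in this finite per-$\alpha$ exceptional set.

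The decisive step -- and the main obstacle -- is to close the contradiction from this configuration. My approach splits on whether $h_{f_\alpha}^{-1}(r_\alpha)$ is infinite for uncountably many $\alpha\in T$ or finite for uncountably many. In the infinite case one rechooses the witness $(n_\alpha,m_\alpha)$ with $n_\alpha$ large enough to avoid both the finite set $F_{\alpha,f}$ and the finite set $\dom(h_{f_\alpha})\setminus\dom(h_f)$; coherence then yields $h_f(n_\alpha,m_\alpha)=r_\alpha$, embedding uncountably many distinct $r_\alpha$ into the countable $\ran(h_f)$, a contradiction. The genuinely hard finite case requires iterated pigeonhole inside the countable universe of finite subsets of $\omega^2$: one successively stabilises the preimage $h_{f_\alpha}^{-1}(r_\alpha)=P^*$, the finite set $\dom(h_{f_\alpha})\setminus\dom(h_f)$, the values of $f_\alpha$ on the finitely many relevant coordinates, and the disagreement set $F_{\alpha,f}\supseteq P^*$, then applies a Ramsey reduction on the pairwise disagreements $F_{\alpha\beta}$ (which, under these stabilisations, are forced into the same fixed finite region $F_{\alpha,f}\cup(\dom(h_{f_\alpha})\setminus\dom(h_f))$) until the restrictions $h_{f_\alpha}\restriction(F_{\alpha,f}\setminus P^*)$ agree. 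At that point the $h_{f_\alpha}$'s for $\alpha$ in an uncountable subfamily differ only by the constant value $r_\alpha$ assigned on $P^*$, and revisiting the upper bound $f$ on $P^*$ shows this family cannot coexist with a single countable $\ran(h_f)$ -- collapsing uncountably many distinct $r_\alpha$ onto finitely many possible values. This Todorcevic-style combinatorial core, which exploits $\omega_1<\mathfrak b$ through the finite uniformity it imposes on pairwise disagreement data, is the technical heart of the argument and the only step I would not expect to write down routinely.
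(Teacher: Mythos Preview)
Your setup through the choice of the upper bound $f$ is essentially the paper's, and in fact your recursion already gives the stronger property the paper uses: since $C_\alpha$ fails to cover $\ran(h_g)$ \emph{mod finite}, and $\ran(h_g)\subseteq^*\ran(h_{f_\alpha})$, the set $\ran(h_{f_\alpha})\setminus C_\alpha$ is infinite, not just nonempty. You then discard this and track only the single witness $r_\alpha$, which is what forces you into the case analysis.

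The infinite-preimage case is fine, but the finite-preimage case has a genuine gap. After stabilising $P_\alpha=P^*$, $F_{\alpha,f}=F^*$, and $\dom(h_{f_\alpha})\setminus\dom(h_f)=D^*$, you want to pigeonhole on $h_{f_\alpha}\restriction\bigl((F^*\cup D^*)\setminus P^*\bigr)$. But the target of these restrictions is not a priori countable: nothing in the definition of an $\omega^\omega$-family bounds the ranges, and in fact your own $r_\alpha$'s already show that the values taken on this fixed finite set form an uncountable set. So neither straight pigeonhole nor a Ramsey reduction on pairwise disagreement sets $F_{\alpha\beta}$ (a finite colouring of $[\omega_1]^2$, which need not have uncountable homogeneous sets) yields the agreement you claim. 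The final sentence about ``revisiting the upper bound $f$ on $P^*$'' does not produce a contradiction either: an uncountable family of pairwise coherent functions disagreeing only on a fixed finite $P^*$, each taking a distinct constant value there, is perfectly consistent with coherence.

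The paper avoids all of this with a one-line diagonal. Having chosen $f$ with $f_\alpha<^*f$ for all $\alpha$, let $L=\ran(h_f)$; since $L$ is countable, pick $\delta<\omega_1$ so large that every point of $L$ lying in any $\ran(h_{f_\alpha})$ already lies in some $\ran(h_{f_\beta})$ with $\beta<\delta$. Coherence gives $\ran(h_{f_\delta})\subseteq^* L$, hence $\ran(h_{f_\delta})\subseteq^*\bigcup_{\beta<\delta}\ran(h_{f_\beta})$, contradicting the recursive choice of $f_\delta$. Your construction supports this argument verbatim; you just need to use the ``infinitely many points escape $C_\alpha$'' property rather than a single $r_\alpha$.
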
   

\begin{proof} Assume there is no such countable set. Recursively
choose $\{ f_\alpha : \alpha < \omega_1\}\subset \omega^\omega$, 
 so that the range of $h_{f_\alpha}$ is not, mod finite, contained
 in the union of the ranges of the family $\{ h_{f_\beta} : \beta < \alpha\}$.
 Choose any $f\in \omega^\omega$ satisfying that 
    $f_\alpha <^* f$ for all $\alpha < \omega_1$. Let $L$ denote
    the range of $h_{f}$ and choose $\delta<\omega_1$ large enough
    so that any point of $L$ that is in the range of any $h_{f_\alpha}$
     ($\alpha < \omega_1$) is already in the range of some
     $h_{f_\alpha}$ with $\alpha < \delta$. We now have
     a contradiction to the coherence assumption that
     guarantees that $h_{f_\delta}$ is, mod finite,
     contained in $h_f$. 
\end{proof}
   
\begin{theorem} If\label{Trivial} $(\dagger_1^+),(\dagger_2^+)$ and the principle 
$\omega^\omega$-cohere hold, 
then every autohomeomorphism of $\mathbb M^*$
is trivial.
\end{theorem}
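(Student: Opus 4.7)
The plan is to reduce to the case $H_\Psi=\id$, then use $(\dagger_1^+)$ and $(\dagger_2^+)$ to extract, for each $f\in\omega^\omega$, a discrete trivialization of $\Psi$ on a canonical closed discrete set $D_f\subset\mathbb M$, assemble these into a coherent $\omega^\omega$-family, amalgamate by the principle $\omega^\omega$-cohere, and show the amalgam induces $\Psi$. Since $(\dagger_1^+),(\dagger_2^+)$ imply $(\dagger_1),(\dagger_2)$, Theorem \ref{Histrivial} yields that $H_\Psi$ is trivial modulo ccc-over-fin; $(\dagger_1^+)$ itself then delivers a mod-finite bijection $h:\mathbb N\to\mathbb N$ inducing $H_\Psi$. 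Lifting $h$ to a column-permutation autohomeomorphism of $\mathbb M^*$ and composing, I may assume $\Psi(\mathbb I_u)=\mathbb I_u$ for every $u\in\mathbb N^*$.

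Fix an enumeration $Q=\{q_m:m\in\omega\}$ of the rationals in $(0,1)$ and, for each $f\in\omega^\omega$, set $D_f=\{(n,q_m):m<f(n)\}$, a closed discrete subset of $\mathbb M$. Decomposing $D_f^*$ into its levels $\{(n,q_m):n\in L_m\}^*$ with $L_m=\{n:m<f(n)\}$ exhibits each level as an $\mathbb N^*$ cut-set over the ultrafilters concentrating on $L_m$. By $(\dagger_2^+)$ level by level, $\Psi(D_f^*)$ is a union of trivial cut-sets, and by $(\dagger_1^+)$ each level-restriction is induced by a mod-finite bijection. Since $\mathbb N^*$ cut-sets are $P_{\mathfrak b}$-sets by Proposition \ref{Psets}, the targets can be perturbed into $\mathbb N\times Q$ without changing their trace on $\mathbb M^*$, producing a single column-preserving, within-column order-preserving bijection $\phi_f:D_f\to E_f\subset\mathbb N\times Q$ inducing $\Psi\restriction D_f^*$. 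Encode $\phi_f$ as $h_f:\{(n,m):m<f(n)\}\to\omega$ by $\phi_f(n,q_m)=(n,q_{h_f(n,m)})$.

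When $f<^* g$, both $\phi_f$ and $\phi_g\restriction D_f$ trivialize $\Psi\restriction D_f^*$, so by uniqueness of trivial maps (under $(\dagger_1^+)$) they agree mod finite, yielding $h_f=^* h_g\restriction\dom(h_f)$. Hence $\{h_f:f\in\omega^\omega\}$ is a coherent $\omega^\omega$-family, and $\omega^\omega$-cohere produces a total $h:\omega\times\omega\to\omega$ coherent with it; the proposition preceding this theorem then confines the range of $h$ mod finite to a countable set, which may be reindexed to $\omega$. Because each $h_f$ is order-preserving in its second coordinate, for all but finitely many $n$ the map $m\mapsto h(n,m)$ is order-preserving on a cofinite subset of $\omega$. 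Define $\tilde\phi(n,q_m)=(n,q_{h(n,m)})$ on its cofinite domain; for each such $n$ this extends uniquely to a continuous order-preserving $\phi_n:[0,1]\to[0,1]$ (fixing the endpoints after finite adjustment), assembling into a continuous $\phi:\mathbb M\to\mathbb M$ modulo finitely many columns. Its $\beta$-extension agrees with $\Psi$ on the dense family $\bigcup_f D_f^*$, hence on all of $\mathbb M^*$. Running the same argument on $\Psi^{-1}$ produces the inverse of $\phi^*$, so $\phi$ is a homeomorphism and $\Psi=\phi^*$ is trivial.

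The principal obstacle will be executing the rationalization of the $E_f$'s simultaneously across all $f\in\omega^\omega$ in a manner compatible with both the coherence of $\{h_f\}$ and the within-column order, since the $P_{\mathfrak b}$-slack granted by Proposition \ref{Psets} must be exercised consistently rather than $f$ by $f$; a promising route is to fix a canonical perturbation rule depending only on the underlying cut-sets $\Psi(D^i_f{}^*)$ (which depend only on the level $i$, not on $f$) and verify coherence after the fact. A secondary concern is confirming that $\phi^*=\Psi$ on all of $\mathbb M^*$ rather than only on $\bigcup_f D_f^*$; this should follow from the density of standard cut-points in standard subcontinua together with continuity, but will require a short explicit argument exploiting the neighborhood structure from Proposition \ref{Psets}.
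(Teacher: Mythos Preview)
Your outline is the paper's proof: reduce to $H_\Psi=\id$, set $D_f=\{(n,q_\ell):\ell<f(n)\}$, use $(\dagger_2^+)$ and $(\dagger_1^+)$ to obtain liftings $\sigma_f:D_f\to\mathbb M$ of $\Psi\restriction D_f^*$, observe that $\{\sigma_f:f\in\omega^\omega\}$ is a coherent $\omega^\omega$-family, amalgamate via $\omega^\omega$-cohere to a single $\sigma$, and extend columnwise to a homeomorphism inducing $\Psi$.

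Your ``principal obstacle'' is illusory. The definition of an $\omega^\omega$-family in this paper constrains only the \emph{domains} $\dom(h_f)=\{(n,m):m<f(n)\}$; nothing is assumed about the ranges. The paper therefore skips your rationalization step entirely and lets each $\sigma_f$ take values in $\mathbb M$, applying $\omega^\omega$-cohere directly to the $\mathbb M$-valued family. You should do the same: drop the encoding $h_f:\dom(h_f)\to\omega$ and the perturbation of the $E_f$'s into $\mathbb N\times Q$, together with all the consistency worries this created. The amalgam $\sigma:\mathbb N\times Q\to\mathbb M$ is then automatically, for all but finitely many $n$, a one-to-one order-preserving map of $\{n\}\times Q$ into $\{n\}\times[0,1]$ with dense image, since any failure would be visible on some $D_f$ and contradict $\sigma_f\subset^*\sigma$.

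Your secondary concern is genuine but handled by a short oscillation argument rather than by density of $\bigcup_f D_f^*$ alone (density does not suffice until you know the columnwise extension $g=\bigcup_n g_n$ is continuous). Given an arbitrary selector $R=\{(n,r_n)\}$, use $(\dagger_2^+)$ to choose $S=\{(n,s_n)\}$ with $\Psi(R^*)=S^*$. If $\sigma$ oscillated by some $\epsilon_n>0$ at $(n,r_n)$ for infinitely many $n$, pick a thin standard neighborhood of $R$ whose $\Psi$-image lies in the $\epsilon_n/4$-tube about $S$, then choose rationals $q_{\rho_1(n)},q_{\rho_2(n)}$ in that neighborhood with $|\sigma(n,q_{\rho_1(n)})-\sigma(n,q_{\rho_2(n)})|>\epsilon_n$; for any $f$ dominating $\rho_1,\rho_2$ this contradicts that $\sigma$ agrees with $\sigma_f$ and hence with $\Psi$ on $D_f^*$. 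This simultaneously yields continuity of each $g_n$ and $\beta g\restriction R^*=\Psi\restriction R^*$, finishing the proof.
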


\begin{proof} Let $\Psi$ be an autohomeomorphism of $\mathbb M^*$. Since
we are assume $(\dagger_1^+)$, there is no loss to assume that 
   $\pi = \pi\circ \Psi$ (i.e. $\Psi$ is a lifting of the identity map on $\mathbb N^*$). 
   Fix an enumeration, $\{ q_\ell : \ell\in \omega\}$, of the rationals in $[0,1]$. 
   For every $f\in \omega^{\mathbb N}$, let $D_f 
    = \{ (n,q_\ell) : n\in \mathbb N, \ell < f(n)\}$.  Before proceeding
    we note that the collection $\bigcup \{ (D_f)^* : f\in \omega^{\mathbb N}\}$ is
     a dense subset of $\mathbb M^*$. By $(\dagger_2)$, we may fix, 
     for each $f\in \omega^{\mathbb N}$, 
     a countable
     set $E_f \subset \mathbb M$, satisfying that $(E_f)^* = \Psi((D_f)^*)$.
     Then by $(\dagger_1^+)$, we may fix a lifting $\sigma_f : D_f \rightarrow E_f$
     that induces the homeomorphism $\Psi\restriction (D_f)^*$.  
     \medskip
     
     Loosely identifying $\mathbb N$ and $\{ q_\ell : \ell < \omega\}$ with $\omega$,
      it is apparent that $\mathcal H  = \{ \sigma_f : f\in \omega^{\mathbb N}\}$ 
      can be regarded as an $\omega^\omega$-family.  Since $\sigma_f$ and 
       $\sigma_g$ (for $f<^*g$) induce the same mapping on $(D_f)^*$
       (i.e. the mapping $\Psi$), it follows that $\mathcal H$ is a coherent family. 
       By the principle $\omega^\omega$-cohere, we may choose a function 
        $\sigma: \mathbb N\times \{ q_\ell : \ell\in \omega\} \rightarrow \mathbb M$
        satisfying that $\sigma_f\subset^* \sigma$ for all $f\in \omega^{\mathbb N}$.         
        It follows that $\sigma$ is, mod finite, 1-to-1 and
        $\sigma\restriction \left(\{n\}\times I\right)$ is an
        order-preserving preserving function into $\{n\}\times I$
        because 
        any failure would violate $\sigma_f\subset^* \sigma$ for some suitably
        large $f$. Similarly, it is easily shown that the range of $\sigma$ is dense
        in $\{n\}\times I$ for all but finitely many $n\in\mathbb N$. 
        \medskip
        
        The final thing to show is that, for all but finitely many $n\in\mathbb N$,
         $\sigma\restriction \left(\{n\}\times [0,1]\right)$ is the restriction of
         a homeomorphism $g_n : \{n\}\times [0,1] \rightarrow \{n\}\times [0,1]$,
         and that the resulting (almost homeomorphism) $g = \bigcup_n g_n$ from
          $\mathbb M$ to $\mathbb M$ satisfies that the Stone-{\u C}ech extension
          $\beta g$ contains $\Psi$.  By continuity and density, it suffices to show
          that for any sequence $R =\{ (n,r_n) : n\in \mathbb N\}\subset \mathbb M$,
           for all but finitely many $n\in\mathbb N$, $g_n$ is continuous at $(n,r_n)$
           and that $\beta g
           \restriction  R^*  = \Psi\restriction R^*$.  
           This we do now.   Given such a set $R$ (an $\mathbb N^*$ cut-set), we
           can choose a sequence $S= \{ (n,s_n) : n\in \mathbb N\}\subset \mathbb M$
           satisfying that $\Psi(R^*) = S^*$.  Suppose there is an infinite
           set $a\subset \mathbb N$ satisfying that the oscillation of $\sigma$ on
           each neighborhood of $(n,r_n)$ is greater than some $\epsilon_n >0$. 
           Apply the continuity of $\Psi$ so as to choose a  standard sequence
            $\{ \{n\}\times [c_n,d_n] : n\in a\}$ satisfying that $c_n < r_n < d_n$ for all
             $n\in a$ and so that $\Psi$ sends the set $\left(\bigcup_{n\in a}
             \{n\}\times [c_n,d_n]\right)^*$
             into the set $\left( \bigcup_{n\in a}
             \{n\}\times (s_n-\epsilon_n/4,s_n+\epsilon_n/4)\right)^*$.
             Next choose two functions  $\rho_1, \rho_2\in \omega^{\mathbb N}$ satisfying
             that, for all $n\in a$, $\{q_{\rho_1(n)},q_{\rho_2(n)}\}\subset (c_n,d_n)$
             and so that $| \sigma((n,q_{\rho_1(n)})) - \sigma((n,q_{\rho_2(n)}))| > \epsilon_n$.
             Choose any $f\in \omega^{\mathbb N}$ large enough so that 
              $\rho_1(n)+\rho_2(n) < f(n)$ for all $n\in \mathbb N$. 
             Clearly there is an infinite set $b\subset a$ such that, by symmetry,
               $\sigma((n,q_{\rho_1(n)}))\notin [s_n-\epsilon_n/4,s_n+\epsilon_n/4]$ 
               for all $n\in b$.  But now the set $R_1 = \{ (n,q_{\rho_1(n)}) : n\in b\}$
               is a subset of $R_2 = D_f\cap \left(\bigcup_{n\in a} \{n\}\times [c_n,d_n]\right)$.
               Since $\Psi\restriction R_2^*$ is induced by $\sigma$, we have
                a contradiction since $(\sigma(R_1))^*$
                and $\Psi (R_2^*)$ are disjoint.            
\end{proof}

    \section{Non-trivial autohomeomorphisms}
 In this section we return to the question 
 raised in \cite{DH1993}:
``Given an autohomeomorphism
 $H:\mathbb N^*\rightarrow \mathbb N^*$,
 does there exist an autohomeomorphism
 $\Psi: \left(\mathbb N\times [0,1]\right)^*
 \rightarrow
  \left(\mathbb N\times [0,1]\right)^*$
  satisfying $H\circ \pi = \pi\circ \Psi$?'' 
  We prove that it is consistent to have 
   $(\dagger_1)$ and $(\dagger_2)$ holding in a model in which
   there is a non-trivial autohomeomorphism of $\mathbb N^*$
    (i.e. $(\dagger_1^+)$ fails). Once we succeed, then
    the following is a consequence of Theorem \ref{Histrivial}.
    
    \begin{theorem} There is a model in which there\label{P2}
    are non-trivial autohomeomorphisms of $\mathbb N^*$
    and every automorphism on $\mathbb N^*$ induced
    by an   autohomeomorphism   of
     $\mathbb M^*$  is trivial.
    \end{theorem}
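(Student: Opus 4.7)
The plan is to reduce Theorem~\ref{P2} to the construction of a single model in which $(\dagger_1)$ and $(\dagger_2)$ both hold while $(\dagger_1^+)$ fails. Given such a model, Theorem~\ref{Histrivial} yields that every autohomeomorphism of $\mathbb N^*$ induced by an autohomeomorphism of $\mathbb M^*$ is trivial, whereas the failure of $(\dagger_1^+)$ supplies a non-trivial autohomeomorphism of $\mathbb N^*$; these two facts are precisely the content of the theorem. Thus all the real work is consistency-theoretic, and I view the remainder of the argument as a single model construction.

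To produce the required model, I would start from a model of CH and first build, using a Shelah-style oracle-cc recursion of length $\omega_1$, a non-trivial autohomeomorphism $H$ of $\mathbb N^*$ whose ideal $\mbox{Triv}(H)$ is already ccc over fin. I would then iterate proper forcing with countable support of length $\omega_2$ in order to secure $(\dagger_1)$ and $(\dagger_2)$ in the extension. The iterands come in two flavours: open-graph-style colorings tailored to potential autohomeomorphisms of $\mathbb N^*$, forcing each to be trivial modulo ccc over fin and hence yielding $(\dagger_1)$, and analogous colorings tailored to potential $\mathbb N^*$ cut-sets of $\mathbb M^*$, working with the $P_{\mathfrak b}$-set neighborhood base supplied by Proposition~\ref{Psets}, to force each such cut-set to be trivial on a ccc-over-fin ideal and hence yielding $(\dagger_2)$. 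Since we only need the non-plus versions of the two principles, each iterand can be tailored narrowly enough to leave $H$ intact rather than forcing full OGA.

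The main obstacle is the preservation of the non-triviality of $H$ throughout the iteration. At no stage should a bijection be added that trivialises $H$ modulo finite, and at no limit stage should $\mbox{Triv}(H)$ grow to encompass $\mathcal P(\mathbb N)$. I would address this via an oracle-cc preservation framework: in the ground model, using the $\Diamond$-sequence provided by CH, anticipate every candidate trivialisation of $H$ of size $\aleph_1$ and destroy it during the very construction of $H$; then prove by induction along the $\omega_2$-iteration that each iterand, being oracle-cc relative to a suitable ground-model oracle, adds no trivialisation of $H$. A bookkeeping of length $\omega_2$, together with standard reflection arguments, permits every candidate autohomeomorphism of $\mathbb N^*$ and every candidate $\mathbb N^*$ cut-set to be handled at some intermediate stage. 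Once the iteration is complete, $(\dagger_1)$ and $(\dagger_2)$ are verified in the extension, $H$ remains a non-trivial autohomeomorphism of $\mathbb N^*$, and Theorem~\ref{Histrivial} delivers Theorem~\ref{P2}.
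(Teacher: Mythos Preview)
Your reduction is correct and matches the paper: Theorem~\ref{P2} follows at once from Theorem~\ref{Histrivial} given a model of $(\dagger_1)\wedge(\dagger_2)\wedge\neg(\dagger_1^+)$. The disagreement is entirely in how that model is built.

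The paper does \emph{not} iterate over a \CH\ model while protecting a pre-built $H$. It starts from a model of \PFA\ and forces with Veli\v{c}kovi\'{c}'s poset $\Poset$ of Definition~\ref{poset}, a $\sigma$-directed-closed, $\aleph_2$-distributive order whose generic filter itself supplies the non-trivial autohomeomorphism. Since $\Poset$ adds no reals and no new $\omega_1$-sequences, the task reduces to showing that $(\dagger_1)$ and $(\dagger_2)$ survive. This is done with the Shelah--Stepr\={a}ns machinery: pass to a \CH\ extension by $2^{<\omega_1}$, pick a $\Poset$-generic $\omega_1$-sequence $\{p_\xi\}$ as in Lemma~\ref{getF}, analyse the auxiliary ccc poset $\Poset(\{p_\xi\})$, and for any putative counterexample to $(\dagger_2)$ use Lemma~\ref{oracle} to build a $\Poset(\{p_\xi\})$-name $\dot Q$ of a proper poset forcing an uncountable homogeneous set for the relevant open colouring; then apply \PFA\ in the ground model to $2^{<\omega_1}*\Poset(\{p_\xi\})*\dot Q$. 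The architectural point is that \PFA\ sits \emph{beneath} the forcing that creates $H$, not above it.

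Your construction inverts this order, and that is where the gap lies. Oracle-cc is a device for $\omega_1$-length iterations in which \CH\ is maintained; it gives no control over an $\omega_2$-length countable-support iteration that adds reals. More to the point, the claim that ``each iterand can be tailored narrowly enough to leave $H$ intact'' is exactly the crux and is not substantiated: the OCA-type iterands you need for $(\dagger_1)$ and $(\dagger_2)$ add uncountable homogeneous sets for open colourings, and nothing in your outline prevents one of them, or a limit stage, from introducing an almost-permutation trivialising $H$. In the paper's route this problem never arises, because $\Poset$ adds no reals and hence no new candidate trivialiser; in your route every stage adds reals and preservation must be argued at each of $\omega_2$ many stages and limits. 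Without a concrete substitute for the $\Poset(\{p_\xi\})$ analysis, the proposal is a strategy sketch, not a proof.
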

    
    The model was introduced by Velickovic  (\cite{VelOCA}) and we
    will use the further analysis of the forcing initiated in 
    \cite{ShStvel}.
    We do not know
    if $(\dagger_2^+)$ holds in this model.  It would
    be interesting to have a   formulation of a suitable
    weakening of OCA that can be shown to hold in this model.

First we define
 the  partial order $\Poset$ from \cite{VelOCA}

\begin{definition}
    The\label{poset} 
partial order $\Poset$ is defined to consist of all 1-to-1
    functions 
$f$  where 
\begin{enumerate}
\item $\dom(f) = \ran(f)\subset \mathbb N$,
\item for all $i\in\dom(f)$ and $n\in\omega$, $f(i)\in
  [2^n,2^{n+1})$ if and only if 
 $i\in [2^n,2^{n+1})$
  \item $\limsup_{n\rightarrow\omega}\card{[2^n,2^{n+1})\setminus
\dom(f)} = \omega$\label{growth}
\item for all $i\in \dom(f)$, $i=f^2(i)\neq f(i)$.
\end{enumerate} 
The ordering on $\Poset$ is $\subseteq^{\ast}$.
\end{definition}

It is  shown in \cite{VelOCA}, see also \cite{ShStvel}, that
 $\Poset$ is $\sigma$-directed closed.
 The following partial order was introduced in \cite{ShStvel} as a great
tool to uncover the forcing preservation properties of $\Poset$, such
as the fact that 
 $\Poset$ is $\aleph_2$-distributive
(and so introduces no new $\omega_1$-sequences of subsets of $\mathbb
N$).

\begin{definition}[\cite{ShStvel}*{2.2}]
Given $\{ p_\xi :\xi\in\mu\} $, define 
   $
   \Poset(\{p_\xi : \xi\in\mu\})$  
to be the partial order consisting of all $q\in
\Poset$ 
such that there is some $\xi\in \mu$  such that
 $q =^* p_\xi$.
   The ordering on  $\Poset
   (\{p_\xi : \xi\in\mu\}) $ 
is $p\leq q$ if $p\supseteq q$ (rather than $p\supseteq^* q$ for $\Poset$).
\end{definition}

\begin{lemma}[\cite{step.29}]
In the forcing extension, $V[H]$,
 by $2^{<\omega_1}$,
  there\label{getF} is a maximal $\subset^*$-descending 
  sequence 
$\{ p_\xi : \xi\in\omega_1\}\subset\Poset $ which
  is $\Poset$-generic over $V$ and for which 
$\Poset(\{p_\xi :\xi\in\omega_1\})$ is ccc, $\omega^\omega$-bounding, 
and preserves that $\mathbb R\cap V$ is not meager.
\end{lemma}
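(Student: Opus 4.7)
The plan is to build $\{p_\xi : \xi < \omega_1\}$ by recursion inside $V[H]$, using the $2^{<\omega_1}$-generic $H$ as an $\omega_1$-long source of independent choices. The $\sigma$-directed closure of $\Poset$ (from \cite{VelOCA}) handles all limit stages of countable cofinality, so only the successor steps need real attention. Because $\Poset$ is $\aleph_2$-distributive, $V[H]$ adds no new reals, so assuming CH in $V$ we can enumerate in type $\omega_1$ all $V$-dense subsets of $\Poset$ together with all the $V$-names we will need to control: maximal antichains of $\Poset(\{p_\xi\})$, candidate unbounded functions in $\omega^\omega$, and Borel open covers of $\mathbb R \cap V$.

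At stage $\xi$ we choose $p_\xi$ strictly below all previously chosen $p_\eta$, lying in the $\xi$th enumerated dense set, and simultaneously encoding commitments against the $\xi$th listed antichain, function name and cover name. Clause~(3) of Definition \ref{poset} (the $\limsup$ condition on $|[2^n,2^{n+1})\setminus \dom(p_\xi)|$) is the key lever: it guarantees arbitrarily large local rooms in each dyadic block in which to freely decide finitary data. A designated block of coordinates of $H$ at stage $\xi$ then selects which legal extension to take, so that the recursion is ``Cohen-like'' over $V$ and, by standard density arguments, achieves $\Poset$-genericity of $\{p_\xi\}$ over $V$.

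The three preservation properties all follow from a reflection feature of $\Poset(\{p_\xi : \xi<\omega_1\})$: since every condition is $=^*$ some $p_\xi$, any putative $\aleph_1$-sized antichain, or sequence of names producing an unbounded function or an open cover of $V\cap\mathbb R$, reflects to a cofinal $\omega_1$-set of stages already addressed by the construction. By virtue of the generic $H$, the commitment made at such a stage $\xi$ blocks the reflected object, sealing the required ccc, $\omega^\omega$-bounding, and non-meager preservation. The main obstacle, and the substantive content of \cite{ShStvel}, is showing that a single $H$ can feed all four tasks without conflict; this is done by partitioning $\omega_1$ into orthogonal uses of $H$'s coordinates and verifying a preservation argument reminiscent of Shelah's proper forcing preservation theorems, where the finitary flexibility afforded by clause~(3) of Definition \ref{poset} is what keeps the three preservation obligations from ever being forced into competition.
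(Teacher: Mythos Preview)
The paper does not prove this lemma; it is cited from \cite{step.29} (and the underlying machinery is from \cite{ShStvel}). So there is no ``paper's own proof'' to compare against directly. That said, the paper does display the essential ingredient immediately afterwards as Lemma~\ref{oracle} (from \cite{ShStvel}*{2.4}), and your sketch omits precisely that ingredient.

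Your proposal has the right large-scale architecture---a length-$\omega_1$ recursion in $V[H]$, limits handled by $\sigma$-directed closure, genericity over $V$ by meeting the $V$-dense sets---but the treatment of the preservation properties is where the real content lies, and there your argument is a hand-wave. Saying that ``any putative $\aleph_1$-sized antichain \ldots\ reflects to a cofinal $\omega_1$-set of stages already addressed'' is not a proof: the antichain (or the unbounded-function name, or the meager cover) is a $\Poset(\{p_\xi:\xi<\omega_1\})$-name, and at stage $\xi$ you only have $\{p_\eta:\eta<\xi\}$, so you cannot literally ``address'' it then. The actual mechanism, encapsulated in Lemma~\ref{oracle}, is to pick a countable elementary submodel $\mathfrak U\ni\{p_\eta:\eta<\xi\}$ and choose $p_\xi$ to be $\mathfrak U$-generic for $\Poset(\{p_\eta:\eta<\xi\})$; the crucial clause is that every dense set in $\mathfrak U$ then remains predense in $\Poset(\{p_\eta:\eta<\mu\})$ for all later $\mu$. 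This oracle-cc style bookkeeping is what simultaneously delivers ccc, $\omega^\omega$-bounding, and non-meager preservation; your ``partition $\omega_1$ into orthogonal uses of $H$'s coordinates'' is not how the argument runs and would not obviously work.

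Two smaller points. First, your sentence ``Because $\Poset$ is $\aleph_2$-distributive, $V[H]$ adds no new reals'' conflates the two forcings: $V[H]$ is the $2^{<\omega_1}$-extension, and it adds no reals because $2^{<\omega_1}$ is $\sigma$-closed, not because of any property of $\Poset$. Second, assuming $\CH$ in $V$ is wrong here: the ambient model is a model of $\PFA$, so $\mathfrak c=\aleph_2$; the point of passing to $V[H]$ is precisely to obtain an $\omega_1$-enumeration of the relevant objects that $V$ itself lacks.
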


\begin{proposition} If $\xi\in\omega_1$ and $\{p_\xi : \xi\in \mu\}$ is 
a descending sequence in $\Poset$, then $\Poset(\{p_\xi : \xi\in \mu\})$
is a countable atomless poset. 
\end{proposition}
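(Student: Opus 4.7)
The plan is to verify both properties directly from Definition \ref{poset} and the definition of $\Poset(\{p_\xi : \xi \in \mu\})$, interpreting the hypothesis as asserting that $\mu$ is a countable ordinal. The argument splits cleanly into countability and atomlessness.

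For countability, I fix any $\xi \in \mu$ and note that any $q \in \Poset$ with $q =^* p_\xi$ differs from $p_\xi$ on only finitely many ordered pairs. Since $p_\xi$ is a single fixed function on a subset of $\mathbb{N}$, the collection of such finite modifications is countable (the growth condition (3) of Definition \ref{poset} is automatically preserved under finite alteration). As $\mu$ is countable, $\Poset(\{p_\xi : \xi \in \mu\})$ is a countable union of countable sets.

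For atomlessness, given $q \in \Poset(\{p_\xi : \xi \in \mu\})$ with $q =^* p_\xi$, I will produce $q' \supsetneq q$ lying in the same poset. The key ingredient is condition (3) of Definition \ref{poset}: since $\limsup_n |[2^n,2^{n+1}) \setminus \dom(p_\xi)| = \omega$ and this property is invariant under finite modification, the same limsup is infinite for $q$. So I choose $n$ large enough that $[2^n,2^{n+1}) \setminus \dom(q)$ has at least two distinct elements $i, j$, and set $q' = q \cup \{(i,j),(j,i)\}$. A direct verification of properties (1)--(4) shows $q' \in \Poset$: the two new ordered pairs extend $q$ to a 1-to-1 fixed-point-free involution respecting the block $[2^n,2^{n+1})$, and the growth condition is untouched. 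Since $q' =^* q =^* p_\xi$ we have $q' \in \Poset(\{p_\xi : \xi \in \mu\})$, and $q' \supsetneq q$ means $q' < q$ in the ordering of $\Poset(\{p_\xi : \xi \in \mu\})$.

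There is no genuine obstacle here; the proof is a matter of unpacking the definitions and applying the growth condition. The only subtlety worth flagging is that the ordering on $\Poset(\{p_\xi : \xi \in \mu\})$ is strict inclusion, not $\subseteq^*$ as on the ambient $\Poset$, so going strictly below a condition requires actually enlarging it as a function, and condition (3) of Definition \ref{poset} is exactly what guarantees there is always room to do so.
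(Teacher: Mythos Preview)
The paper states this proposition without proof, so there is nothing to compare your argument against directly. Your countability argument is correct: for each fixed $\xi$ the set of $q\in\Poset$ with $q=^*p_\xi$ consists of finite modifications of a single function, hence is countable, and a countable union of such sets is countable.

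Your atomlessness argument, however, has a gap. You produce a single proper extension $q'\supsetneq q$, but atomlessness requires that below every condition there exist two \emph{incompatible} extensions. Merely showing that every condition has a strictly stronger condition does not rule out atoms --- consider $(\omega,\geq)$, in which every element has a proper extension yet any two elements are compatible. The repair is immediate and uses the same growth condition you already invoked: since $\limsup_n\bigl|[2^n,2^{n+1})\setminus\dom(q)\bigr|=\omega$, choose $n$ so that $[2^n,2^{n+1})\setminus\dom(q)$ contains at least three distinct points $i,j,k$; then $q_1=q\cup\{(i,j),(j,i)\}$ and $q_2=q\cup\{(i,k),(k,i)\}$ both lie in $\Poset(\{p_\xi:\xi\in\mu\})$, both properly extend $q$, and disagree at $i$, so no function can contain both and they are incompatible under $\supseteq$.
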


\begin{lemma}[\cite{ShStvel}*{2.4}]
 Given $\eta\in\omega_1$, a $\subset^*$-descending sequence\label{oracle} 
 $\{p_\xi : \xi \in\eta\}\subset \Poset$, and a countable elementary
 submodel $\mathfrak U\prec (H(\aleph_2),\in)$,  such that
  $\{ p_\xi :\xi\in\eta\}\in \mathfrak U$, then there is a 
   $p\in \Poset$ which is $\mathfrak U$-generic for $\Poset
   (\{p_\xi : \xi\in \eta\})$. Moreover, for any extension
    $\{ p_\xi : \xi\in \mu\}\subset \Poset$ (again
     $\subset^*$-descending) such that $\eta<\mu$ and $p_\eta=p$,
     every $D\in \mathfrak U$ is predense in $\Poset(\{p_\xi :\xi\in\mu\})$
     provided it is dense in $\Poset(\{p_\xi : \xi\in \eta\})$.
\end{lemma}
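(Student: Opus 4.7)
The plan is to build $p$ as a lower bound of the descending sequence that is simultaneously $\mathfrak{U}$-generic for the countable atomless poset $\Poset(\{p_\xi : \xi \in \eta\})$, via a fusion-style recursion along a cofinal $\omega$-sequence.

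First, using countability of $\mathfrak{U}$ and elementarity, I would fix a strictly increasing $(\xi_n)_{n\in\omega}\subset \mathfrak{U}\cap\eta$ cofinal in $\eta$, and enumerate the dense open subsets $\{D_n:n\in\omega\}$ of $\Poset(\{p_\xi:\xi\in\eta\})$ that lie in $\mathfrak{U}$. Then recursively build $(q_n,N_n)$ with $q_n\in D_n\cap\mathfrak{U}$, $q_n=^*p_{\zeta_n}$ for some $\zeta_n\in\mathfrak{U}\cap\eta$ with $\zeta_n\geq\xi_n$, and $q_n\restriction[2^{N_n},\infty)=p_{\zeta_n}\restriction[2^{N_n},\infty)$, subject to the fusion constraints $q_{n+1}\restriction[0,2^{N_n}) = q_n\restriction[0,2^{N_n})$ and $N_{n+1}>N_n$ chosen large enough that $p_{\zeta_k}\subseteq p_{\zeta_{n+1}}$ holds \emph{exactly} on $[2^{N_{n+1}},\infty)$ for all $k\leq n+1$ (possible because each pair $p_{\zeta_k},p_{\zeta_{n+1}}$ agrees mod finite on its common domain, since the sequence is $\subseteq^*$-descending).

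The step is justified as follows. The auxiliary hybrid $r_0 := q_n\restriction[0,2^{N_n})\cup p_{\xi_{n+1}}\restriction[2^{N_n},\infty)$ is a condition in $\Poset(\{p_\xi:\xi\in\eta\})$: block-preservation and $f^2=\id$ separate into the two ranges since $q_n$ preserves blocks, the involution stays inside $[0,2^{N_n})$, and the $\limsup$ condition is inherited from $p_{\xi_{n+1}}$; also $r_0=^*p_{\xi_{n+1}}$. Since $r_0,D_{n+1}\in\mathfrak{U}$, by elementarity and density of $D_{n+1}$ one finds $q_{n+1}\in D_{n+1}\cap\mathfrak{U}$ with $q_{n+1}\supseteq r_0$; the index $\zeta_{n+1}$ with $q_{n+1}=^*p_{\zeta_{n+1}}$ then satisfies $\zeta_{n+1}\geq\xi_{n+1}$ by the descending-sequence structure.

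Next I would assemble $p$ by $p\restriction[0,2^{N_0}):=q_0\restriction[0,2^{N_0})$ and $p\restriction[2^{N_n},2^{N_{n+1}}):=p_{\zeta_{n+1}}\restriction[2^{N_n},2^{N_{n+1}})$. Verifying $p\in\Poset$ amounts to checking the four axioms block-by-block (the $\limsup$ condition propagates from each $p_{\zeta_n}$), and $p\supseteq^*p_\xi$ for all $\xi<\eta$ follows from cofinality of $(\xi_n)$ plus the exact monotonicity of the $p_{\zeta_k}$'s on the tails. For $\mathfrak{U}$-genericity, given $D\in\mathfrak{U}$ dense, take $n$ with $D=D_n$; the fusion gives $p\restriction[0,2^{N_n})=q_n\restriction[0,2^{N_n})$, and on $[2^{N_n},\infty)$ the exact alignment $p_{\zeta_n}\subseteq p_{\zeta_k}$ (for $k\geq n$) yields $p\supseteq q_n$ exactly, so $p$ itself witnesses compatibility of $q_n$ with $p=p_\eta$ in $\Poset(\{p_\xi:\xi\leq\eta\})$.

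For the \emph{moreover} clause, given any $\subseteq^*$-descending extension with $p_\eta=p$ and a condition $r=^*p_\xi\in\Poset(\{p_\xi:\xi\in\mu\})$, split on $\xi<\eta$ versus $\xi\geq\eta$: the first case uses density of $D$ directly, while the second case uses that $r\supseteq^*p\supseteq q_n$ and the exact alignment built into the construction to produce a common extension. The main obstacle will be the book-keeping in choosing the $N_n$ so that the tail values of all the $p_{\zeta_k}$ agree \emph{exactly} (rather than only mod finite) on $[2^{N_n},\infty)$; this is exactly what is needed to convert the $\subseteq^*$-behavior of $\Poset$ into the $\subseteq$-behavior of $\Poset(\{p_\xi\})$, and it is forced by the descending-sequence hypothesis together with pushing $N_n$ past each finite exceptional set in turn.
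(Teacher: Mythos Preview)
The paper does not prove this lemma; it is cited without proof from \cite{ShStvel}*{2.4}. Your overall strategy---a fusion-style recursion meeting the dense sets of $\mathfrak U$ one at a time while controlling finite initial segments---is indeed the standard approach. However, the execution has a genuine gap.

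You assert the fusion constraint $q_{n+1}\restriction[0,2^{N_n}) = q_n\restriction[0,2^{N_n})$, but your inductive step does not achieve it. You choose $q_{n+1}\in D_{n+1}$ with $q_{n+1}\supseteq r_0$; since the ordering on $\Poset(\{p_\xi:\xi\in\eta\})$ is \emph{actual} containment, the extension $q_{n+1}$ may add new points to its domain inside $[0,2^{N_n})$ beyond those of $r_0$. There is no mechanism to prevent this, and you cannot simply delete those points afterward without leaving $D_{n+1}$. Consequently your later claim that $p\restriction[0,2^{N_n}) = q_n\restriction[0,2^{N_n})$, and hence that $p\supseteq q_n$ exactly, is unjustified. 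The same issue arises on the intermediate blocks: you set $p\restriction[2^{N_{n-1}},2^{N_n}) = p_{\zeta_n}\restriction[2^{N_{n-1}},2^{N_n})$, yet $q_n$ is only specified to agree with $p_{\zeta_n}$ from $2^{N_n}$ onward, so on $[2^{N_{n-1}},2^{N_n})$ the condition $q_n$ may differ from $p_{\zeta_n}$, again breaking $p\supseteq q_n$.

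A second problem is the growth condition (item (3) of Definition~\ref{poset}) for $p$. Saying it ``propagates from each $p_{\zeta_n}$'' is not enough: you are gluing together \emph{finite} blocks $[2^{N_n},2^{N_{n+1}})$ of different $p_{\zeta_{n+1}}$'s, and nothing in your choice of $N_{n+1}$ guarantees that such a block contains an interval $[2^m,2^{m+1})$ with more than $n$ points missing from $\dom(p_{\zeta_{n+1}})$. The standard fix is to \emph{reserve}, at each stage, a specific block $[2^{m_n},2^{m_n+1})$ with at least $n$ points outside the current domain, freeze $p$ on that block, and then push $N_{n+1}$ past it before searching for $q_{n+1}$; but you must also argue that later $q_k$'s do not fill in the reserved block, which your construction as written does not ensure. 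The ``moreover'' clause then needs a separate argument to pass from $r\supseteq^* p$ to actual compatibility with an element of $D$ in the $\supseteq$-ordering; your sketch does not address the possible finite disagreements between $r$ and $q_n$.
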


Almost all of the work we have to do is to establish additional
preservation results for the poset $\Poset(\{p_\xi:\xi\in\omega_1\})$. 
The sequence $\{ p_\xi : \xi\in\omega_1\}$ (chosen in the
forcing extension by $2^{<\omega_1}$) will always be assumed
to be $\Poset$-generic over the PFA model. Following
 \cite{ShStvel}, let $\mathcal F$ be the filter on 
  $\Poset $ generated by the sequence
   $\{ p_\xi : \xi\in\omega_1\}$. Note that $V[\mathcal F]$
   is a generic extension of $V$ meaning that $\{ p_\xi : \xi\in
   \omega_1\}$ selects an element of every maximal antichain
   of $\Poset$ that is an element of $V$  but it is chosen within
   the model of CH and also introduces an $\omega_1$
   sequence cofinal in $\omega_2$. We may assume,
   as per \cite{ShStvel},
   that many statements about
   $\Poset$-names of Borel subsets of $\mathbb M$
    that are forced by $\Poset$ to hold will hold
    in $V[\mathcal F]$. In addition, there are no new
    Borel subsets of $\mathbb M$.

Once
these are established, we are able to apply the standard PFA type
methodology as demonstrated in \cite{ShStvel, step.29}. The technique
is to construct a $\Poset(\{ p_\xi : \xi\in\omega_1\})$-name
$\dot Q$  of
a proper poset and to then invoke PFA in the ground model
so as to select  a filter meeting a given choice of $\omega_1$-many 
dense
subsets of $2^{<\omega_1}*\Poset(\{p_\xi : \xi\in\omega_1\})*\dot Q$.
Using the proof that $\Poset$ is $\omega_1$-distributive, there is
then a condition $p\in \Poset$ that shows that simply
forcing with $\Poset$ yields
 the desired conclusion
from meeting those $\omega_1$-many dense sets.

For example, it is shown in \cite{step.29} that $\mathbb P$
forces  that $\mbox{Triv}(H)$ is a dense $P$-ideal (i.e.
$(\dagger_1^-)$ holds). 
However, since this is weaker than $(\dagger_1)$ we
refer to the following theorem to assert that $\mathbb P$
forces that $(\dagger_1)$ holds.  

\begin{theorem}[4.17 of \cite{Dowcccoverfin}] 
In the extension obtained by forcing over a model of PFA,
if $\Phi$ is a homomorphism from $\mathcal P(\mathbb N)/\mbox{fin}$
onto $\mathcal P(\mathbb N)/\mbox{fin}$, then $\mbox{Triv}(\Phi)$ is
a ccc over fin ideal.
\end{theorem}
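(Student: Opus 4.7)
The plan is to prove the contrapositive by an application of PFA in the ground model to a carefully chosen proper forcing. Suppose, in the forcing extension $V[G]$, that $\Phi$ is a surjective homomorphism of $\mathcal P(\mathbb N)/\mbox{fin}$ and $\{a_\xi : \xi < \omega_1\}$ is an uncountable pairwise almost disjoint family such that $a_\xi \notin \mbox{Triv}(\Phi)$ for every $\xi$. Because $\Poset$ is $\aleph_2$-distributive and adds no new $\omega_1$-sequences of reals, the almost disjoint family and the countable pieces of $\Phi$ restricted to each $\mathcal P(a_\xi)/\mbox{fin}$ can be captured by names (or representatives) controllable in the ground model of PFA.

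The first step is to extract, for each $\xi$, a concrete obstruction to triviality. Since $\Phi$ is not trivial on $a_\xi^*$, there must exist disjoint infinite $b_\xi, b_\xi' \subset a_\xi$ and disjoint infinite $\tilde b_\xi, \tilde b_\xi' \subset \mathbb N$ with $\Phi([b_\xi]) = [\tilde b_\xi]$ and $\Phi([b_\xi']) = [\tilde b_\xi']$, such that no finite-to-one partial map $h_\xi : a_\xi \to \mathbb N$ simultaneously sends $b_\xi$ almost into $\tilde b_\xi$, $b_\xi'$ almost into $\tilde b_\xi'$, and induces $\Phi$ on $a_\xi^*$. These local witnesses are produced by a standard recursion exploiting that failure of triviality is a countable-type property witnessed inside $\mathcal P(a_\xi)/\mbox{fin}$.

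Second, in the PFA ground model one constructs a proper poset $\mathcal Q$ (iterated appropriately after $2^{<\omega_1} * \Poset(\{p_\xi\})$) whose conditions are finite approximations $(F, h)$ with $F \in [\omega_1]^{<\omega}$ and $h$ a finite partial injection on $\mathbb N$ required to be compatible with inducing $\Phi$ on $\bigcup_{\xi \in F} a_\xi$ relative to the chosen witnesses. One arranges the dense sets $D_\xi = \{q : \xi \in F(q)\}$ and $E_n = \{q : n \in \dom h(q)\}$ and applies PFA, as in the methodology outlined via Lemma \ref{getF} and Lemma \ref{oracle}, to obtain a generic filter whose union yields a global bijection $h : \mathbb N \to \mathbb N$ that would trivialize $\Phi$ on $a_\xi^*$ for uncountably many $\xi$. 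This contradicts the Step 1 witnesses, yielding the desired conclusion that $\mbox{Triv}(\Phi)$ is ccc over fin.

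The hard part is the construction of $\mathcal Q$ and the verification of properness: the poset must force extensions of $h$ that respect the commitments made in each $\mathcal P(a_\xi)/\mbox{fin}$ simultaneously. Properness is established in the usual Todorcevic style, with a countable elementary submodel $M \prec H(\theta)$ and an $(M, \mathcal Q)$-generic condition built by a fusion argument, using the almost disjointness of the $a_\xi$'s to ensure that commitments added outside $M$ do not conflict with those already inside $M$. The finer point is that the preservation lemmas for $\Poset(\{p_\xi : \xi\in\omega_1\})$, such as being $\omega^\omega$-bounding and preserving non-meagerness of $\mathbb R \cap V$, must be invoked to show that the ground-model witnesses survive into $V[G]$, so that the final contradiction is legitimate in the extension.
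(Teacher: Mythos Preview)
The paper does not prove this theorem; it is quoted verbatim as Theorem~4.17 of \cite{Dowcccoverfin} and used as a black box. So there is no ``paper's own proof'' to compare against here, and your proposal must be judged on its own merits.

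Your outline has a genuine gap at the heart of the construction of $\mathcal Q$. You want conditions $(F,h)$ with $h$ a finite partial injection ``compatible with inducing $\Phi$ on $\bigcup_{\xi\in F}a_\xi$,'' and you want the sets $D_\xi=\{q:\xi\in F(q)\}$ to be dense. But you have assumed from the outset that $a_\xi\notin\mbox{Triv}(\Phi)$: there is \emph{no} injection on $a_\xi$ inducing $\Phi\restriction a_\xi^*$, so there are no finite approximations to one either. Concretely, for any finite $h$ there will be some $c\subset a_\xi$ with $h[c]^*\neq\Phi(c^*)$, and extending $h$ cannot repair this. Thus either $D_\xi$ fails to be dense or your compatibility requirement is vacuous, and in either case the filter produced by PFA does not yield a map that trivializes $\Phi$ on any $a_\xi$. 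The ``contradiction with the Step~1 witnesses'' never materializes.

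The arguments that actually succeed (in \cite{Dowcccoverfin}, and in the Velickovic and Shelah--Stepr\=ans papers it builds on) do not try to force a trivializing lift. Instead one uses the non-triviality on the $a_\xi$'s to define an open partition $R$ on a suitable separable metric space of partial selectors or partial lifts, shows (inside the $\Poset$-extension) that there can be no uncountable $R$-homogeneous set because such a set would violate $\Phi$ being a homomorphism, and then uses the PFA machinery (via $2^{<\omega_1}*\Poset(\{p_\xi\})$ and the oracle Lemma~\ref{oracle}) to rule out the alternative countable $R$-free cover. The proof of the $(\dagger_2)$ theorem immediately following the cited statement in this paper is a template for exactly this pattern; your $\mathcal Q$ should be the Todorcevic poset that \emph{adds an $R$-homogeneous set}, not one that tries to build a global lift.
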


Now we adapt the construction from \cite{ShStvel}
of
 $\{ p_\xi : \xi\in\omega_1\}$ so that we also have
 that $(\dagger_2)$ holds. The approach also incorporates
 ideas from Velickovic's proof that OCA and $\mathbb{MA}(\omega_1)$
 implies $(\dagger_1)$. 
 
 \begin{theorem} In the extension obtained by forcing
 over a model of PFA by $\Poset$, 
 every $\mathbb N^*$ cut-set of $\mathbb M^*$ is
 trivial on an ideal that is ccc over fin.
 \end{theorem}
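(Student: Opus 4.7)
The plan is to follow the Shelah--Steprans \PFA{} template of Lemmas \ref{getF} and \ref{oracle}, with a proper-poset construction modelled on the proof in \cite{Dowcccoverfin}*{4.17} that forcing with $\Poset$ over \PFA{} makes $\mathrm{Triv}(H)$ ccc over fin. Fix in $V^{\Poset}$ an $\mathbb N^{*}$ cut-set family $\mathfrak C = \{\mathcal C_f : f \in \mathbb N^{\mathbb N}\}$ and an uncountable almost disjoint family $\{I_\alpha : \alpha < \omega_1\}$ of infinite subsets of $\mathbb N$; we must produce an uncountable $\Lambda \subset \omega_1$ so that $\mathfrak C \restriction I_\alpha$ is trivial for all $\alpha \in \Lambda$. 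Since $\Poset$ is $\aleph_2$-distributive, both $\mathfrak C$ and $\{I_\alpha\}$ already live in the intermediate model $V[\mathcal F]$, in which the tail forcing $\Poset(\{p_\xi : \xi \in \omega_1\})$ is ccc, $\omega^\omega$-bounding, and preserves the non-meagerness of $\mathbb R \cap V$.

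In $V[\mathcal F]$ I would build a $\Poset(\{p_\xi\})$-name $\dot Q$ for a proper poset whose conditions are finite approximations to a common trivialization. A condition $q = (A_q, F_q, n_q, \langle r_q^\alpha : \alpha \in A_q \rangle)$ consists of finite $A_q \subset \omega_1$, finite $F_q \subset \mathbb N^{\mathbb N}$, $n_q \in \mathbb N$, and, for each $\alpha \in A_q$, a partial map $r_q^\alpha : I_\alpha \cap n_q \to \mathbb Q$ with $r_q^\alpha(m) \in (c_f^m, d_f^m)$ for every $f \in F_q$ and $m \in \dom(r_q^\alpha)$; the intended trivializer of $\mathfrak C \restriction I_\alpha$ is the closed discrete set $D_\alpha = \{(m, r^\alpha(m)) : m \in I_\alpha\}$. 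Properness at a countable $\mathfrak U \prec H(\aleph_2)$ is verified using the countable-directedness of $\mathfrak C$ (Definition \ref{gap}(3)): a single $f^* \in \mathbb N^{\mathbb N}$ dominates $\mathfrak U \cap \mathbb N^{\mathbb N}$ in the required thinness sense, and one extends each $r_q^\alpha$ into the $f^*$-thin intervals while absorbing the $\mathfrak U$-dense sets, using the $f$-thin neighborhood base from Proposition \ref{Psets}.

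Invoking \PFA{} in $V$ on the three-step iteration $2^{<\omega_1} * \Poset(\{p_\xi : \xi \in \omega_1\}) * \dot Q$, I select $\aleph_1$ dense subsets that force $A$ to be uncountable, that extend each $r_q^\alpha$ cofinally over $I_\alpha$, and that stabilize $D_\alpha^*$ as exactly the cut-set restricted to $I_\alpha^*$. The ground-model filter obtained, together with Lemma \ref{oracle} applied to a countable $\mathfrak U$ containing this data, produces a single $p \in \Poset$ below all the $p_\xi$'s of $\mathfrak U$ forcing the desired uncountable $\Lambda$; standard absorption using the $\aleph_2$-distributivity of $\Poset$ then upgrades this to a statement true in the full generic extension by $\Poset$.

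The main obstacle is arranging the properness of $\dot Q$ uniformly: the trivializer $D_\alpha$ must satisfy $D_\alpha^* \subset (\bigcup \mathcal C_f)^*$ for every $f \in \mathbb N^{\mathbb N}$, yet conditions only see finitely many $f$'s at a time. Here the $\omega^\omega$-bounding of $\Poset(\{p_\xi\})$ is what ensures that a ground-model $f^*$ dominates the thinness parameters that actually arise, and the preservation of non-meagerness is what prevents pathologies in selecting the rationals $r^\alpha(m)$ compatibly across many $\alpha \in A$. The combinatorial core mirrors Velickovic's OCA$+\textsf{MA}(\omega_1)$ proof of $(\dagger_1^+)$, with rational approximations to cut-points replacing partial permutations of $\omega$.
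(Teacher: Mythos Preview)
Your proposal takes a genuinely different route from the paper, and it has a real gap that the paper's argument is specifically designed to avoid.

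The paper does \emph{not} try to force trivializers directly. Instead it runs an OCA-style dichotomy. On the set $F = \{\,f\restriction I_\alpha : f\in\mathbb N^{\mathbb N},\ \alpha<\omega_1\,\}$ it defines the open relation $R$ consisting of pairs $\{f,f'\}$ with distinct domains $I_\alpha, I_\beta$ for which some $m\in I_\alpha\cap I_\beta$ has $[c^m_f,d^m_f]\cap[c^m_{f'},d^m_{f'}]=\emptyset$. First, the directedness of the cut-set family kills any uncountable $R$-homogeneous set. Second, the oracle construction of $\{p_\xi:\xi<\omega_1\}$ is arranged (via Lemma~\ref{oracle}) so that no countable family $\{F_n\}$ of $R$-free sets can cover $F$: the key point is that if a $<^*$-cofinal family of $f$'s with domain $I_\delta$ all landed in the closure of one $R$-free $F_{\bar n}$, then for each $m\in I_\delta$ the intervals $\{[c^m_f,d^m_f]\}$ would be linked, hence have a common point, and that common selector would witness $I_\delta\in\mathrm{Triv}(\mathfrak C)$---contradiction. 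With both halves of Todorcevic's dichotomy blocked, the proper OCA poset plus the PFA machinery yields the contradiction.

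Your direct approach breaks at the step you yourself flag as the main obstacle. A trivializer $D_\alpha$ must satisfy $D_\alpha^*\subset\bigl(\bigcup\mathcal C_f\bigr)^*$ for \emph{every} $f\in\mathbb N^{\mathbb N}$, and under PFA there are $\aleph_2$ such $f$'s (since $\Poset$ adds no reals and $\mathfrak c=\aleph_2$). The PFA filter meets only $\aleph_1$ dense sets, so it can only certify compatibility of your candidate $r^\alpha$ with an $\omega_1$-sized family $F_0\subset\mathbb N^{\mathbb N}$. Because $\mathfrak d=\aleph_2$, no such $F_0$ is cofinal, and directedness of $\mathfrak C$ goes the wrong way to help (compatibility with a dominating $f^*$ would suffice, but compatibility with $\omega_1$ many dominated $f$'s does not). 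The $\omega^\omega$-bounding of $\Poset(\{p_\xi\})$ is irrelevant here: it says new $f$'s are bounded by ground-model $f$'s, but the $\aleph_2$ ground-model $f$'s are already the problem. Consequently the object your PFA filter produces is not forced by any $p\in\Poset$ to be a trivializer, and no contradiction with ``$I_\alpha\notin\mathrm{Triv}(\mathfrak C)$'' is obtained.

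The paper's coloring argument sidesteps exactly this: the contradiction it extracts (a linked family of intervals at each $m$, hence a common point) is a purely local, first-order fact requiring no quantification over all $f\in\mathbb N^{\mathbb N}$. That is the idea your proposal is missing.
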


 \begin{proof} Let $\{\dot {\mathcal C}_f : f\in \mathbb N^{\mathbb N}\}$
 be a family of $\Poset$-names that is forced (by $\mathbf{1}$) to be an
  $\mathbb N^*$
 cut-set of $\mathbb M^*$. 
 Much as in Theorem \ref{firstOGA}, we can assume that the family
  $\{\dot{\mathcal C}_f : f\in \mathbb N^{\mathbb N}\}$ is also
  forced to be order-preserving in the sense that
   $\left(\bigcup \dot{\mathcal C}_f\right)^*$ 
  contains $\left(  \bigcup\dot{\mathcal C}_g\right)^*$
   whenever $f <^* g$ are in $\mathbb N^{\mathbb N}$.
 
 Suppose also that the ideal of sets on which this cut-set is trivial is not
 ccc over fin. Since $\Poset$ is $\aleph_1$-distributive, we may choose
 a condition $p_0\in\Poset$ and an almost disjoint family
  $\{ I_\alpha : \alpha \in \omega_1\}$ of subsets of $\mathbb N$
  such that $p_0$ forces that none of the $I_\alpha$'s are in the
  trivial ideal for this cut-set. By  \cite{VelOCA} (since this final
  model is a model of Martin's Axiom) it suffices to assume
  that
  the family $\{ I_\alpha : \alpha <\omega_1\}$ is tree-like.
  More specifically,
   there is a function $\sigma :\mathbb N\rightarrow 2^{<\omega}$
   such there is a 1-to-1 enumerated
   collection $\{ \rho_\alpha: \alpha <\omega_1\}
   \subset 2^\omega$  so that
    for all $\alpha<\omega_1$,
      $\sigma(I_\alpha) \subset \{ \rho_\alpha\restriction j : j\in\omega\}$.  
  
  To start the proof, 
  let $p_0\in G$ be
  a $\Poset$-generic filter.  
    We consider the family $F$ of all partial functions
     $f\restriction I_\alpha$ ($f\in \mathbb N^{\mathbb N}$ and
      $\alpha\in\omega_1$). We define the relation $R\subset [F]^2$
      to consist of all unordered pairs $\{f,f'\}$  that satisfying
      \begin{enumerate} 
      \item if $\dom(f)=I_\alpha$ and $\dom(f')=I_\beta$, 
        then $\alpha\neq\beta$,
        \item there is an $m\in \dom(f)\cap \dom(f')$ 
        such that $[c^m_f,d^m_f]\cap [c^m_{f'},d^m_{f'}]$ is empty.
      \end{enumerate}
      We utilize the discrete topology on $S=\{\infty\}\cup
      \mathbb Q^2$ 
      and for any $f \in F$ we identify
       $f$ with an element, $s_f$, of the product space
        $S^{\mathbb N}$, 
         where in coordinate $m\in \dom(f)$,
          $s_f(m)=(c^m_f,d^m_f)$ and
          for $m\in \mathbb N\setminus \dom(f)$,
           $s_f(m)=\infty$. 
      With this topology, using tree-like as in \cite{VelOCA},
      the relation $R$ is an open subset
      of the product space $[\{s_f : f\in F\}]^2$ (i.e. unordered pairs).
      Assume that $\Lambda$ is an uncountable subset
      of $\omega_1$ and that, for each $\xi\in \Lambda$ 
      we set $\bar f_\xi = f_\xi\restriction I_\xi$. 
      The set $\{ \bar f_\xi : \xi\in \Lambda\}$ would
      be called an $R$-homogeneous set if it satisfied
      that       
      $\{ \bar f_\xi,\bar f_\eta\}\in R$ for all $\xi\neq\eta\in \Lambda$.
      We show
      that $\{ \bar f_\xi : \xi\in \Lambda\}$ fails to be an
      $R$-homogeneous set. 
      
      Choose any $f\in \mathbb N^{\mathbb N}$ so that
       $f_\xi <^* f$ for all $\xi\in \Lambda$. 
       Choose $\bar m\in \mathbb N$ and 
       uncountable $\Lambda_1\subset \Lambda$
       so that $[c^m_f,d^m_f]\subset [c^m_{f_\xi}, d^m_{f_\xi}]$
       for all $\bar m<m\in \mathbb N$. 
Choose also uncountable $\Lambda_2\subset\Lambda_1$ so that
 $\{ [c^m_{f_\xi},d^m_{f_\xi}] : m\leq \bar m\}
  =
  \{ [c^m_{f_\eta},d^m_{f_\eta}] : m\leq \bar m\}$  for all $\xi,\eta\in \Lambda_2$. 
  These reductions ensure that $\{\bar f_\xi,\bar f_\eta\}$ is not in 
  $R$ for any pair $\xi,\eta\in \Lambda_2$. 
  
  Suppose now we prove that there is a sequence $\{ p_\xi : \xi\in\omega_1\}$
  as in Lemma \ref{getF} that, in addition, ensures that
   $\Poset(\{p_\xi : \xi\in\omega_1\})$ forces that
   there is a proper poset $Q$ that \textbf{does} force there is an 
   uncountable $R$-homogeneous set. Then by our discussion in the paragraph
   following Lemma \ref{oracle} we would have contradicted  that
    $\{\dot {\mathcal C}_f : f\in \mathbb N^{\mathbb N}\}$
     is forced by $\Poset$ to be  a cut-set.
 
 Todorcevic \cite{TodOCA} has shown that, since
 there is no uncountable $R$-homogeneous set,
 the family $F\subset \mathbb N^{\mathbb N}$ must be covered by a
 countable family $\{F_n : n\in\omega\}$ of sets each with the property
 that $\left[F_n\right]^2$ is disjoint from $R$. 
 Using our separable metrizable topology on $F$
 and the fact that $R$ is open, this is equivalent
 to there being a family $\{ F_n  : n\in\omega\}$ of countable subsets
 of $F$ satisfying that for every $n$, the set of pairs from the
 closure of $F_n$  
 and that the union of the closures of the $F_n$'s cover $F$.
 If we prove that $\{p_\xi : \xi \in\omega_1\}$  forces
 there is no such sequence $\{F_n : n\in\omega\}$ then we
 have completed the proof of the theorem.  
 We use Lemma \ref{oracle} to do so.
 
 \medskip
 
  Let $\eta\in\omega_1$,
  $\{ p_\xi : \xi \in \eta\}\subset \Poset$, and the countable
  elementary submodel $\mathfrak U$ be as in  Lemma
  \ref{oracle}.
  We can assume further $\mathfrak U$ is equal to
   $M\cap H(\aleph_2)$ for some countable elementary 
   submodel $M\prec H(\aleph_3)$ satisfying
  that each of the objects $\{p_\xi:\xi\in\eta\}$, 
   $\{I_\alpha : \alpha\in\omega_1\}$, 
   $\{\dot {\mathcal C}_f : f\in 
   \mathbb N^{\mathbb N}\}$ are elements of $M$.
   Let us note that $R$ itself is an element of $V$ and
   that we may assume there is some $\xi<\eta$ such
   that $p_\xi$ forces that $R$ is the open set resulting
   from the conditions (1) and (2) above applied to the family
  $[F]^2$.
    It suffices
   to prove that if  
  $\{ \dot F_n : n\in\omega\}\in \mathfrak U$ 
  is a set of   $\Poset(\{p_\xi : \xi\in\eta\})$-names
  of countable subsets of $F$,
  then there is a choice of $p_\eta$ (as in Lemma \ref{oracle})
 that forces this sequence fails to have the covering properties
 mentioned in the previous paragraph.

 To do so, we
 first consider the forcing extension
 by the countable poset
 $\Poset(\{p_\xi : \xi\in \eta\})$.  
 Since the $\dot F_n$'s  are countable names of subsets of $F$
  we can fix a
  $\delta \in \mathfrak U\cap \omega_1$  
  such that it is forced that every element of
   $\bigcup\{ \dot F_n : n\in\omega\}$
   has as its domain an element of $\{ I_\beta :\beta <\delta\}$. 
 There is a maximal
 antichain $E_0$ of $\Poset(\{p_\xi : \xi\in \eta\})$ (in 
 $\mathfrak U$) where each $p\in E_0$ forces one of
 the following statements
 \begin{enumerate}
 \item there is an $f\in F\cap \mathfrak U$ such
 that $f$ is not in the closure of $\dot F_n$ for each $n\in\omega$,
 \item there is an $n\in\omega$  and a pair $\{f,f'\}\in \dot F_n\cap R$,
 \item each of the previous two statements (1) and (2) fail to hold.
 \end{enumerate} 
  Now consider any condition $q\in \Poset(\{p_\xi : \xi\in\eta\})$
   that is an extension of an element of $E_0$ in which
   condition (3) holds.  
   
    Working in $M$ we can ask if, for each $f\in \mathbb N^{\mathbb N}$,
    there is a condition $p_f <q$ in $\Poset(\{p_\xi : \xi\in\eta\})$
    and an integer $n_f$ satisfying that $f\restriction I_\delta$ is in the closure
    of $\dot F_{n_f}$.  We show that the answer to this question is ``no''.
    If such a pair, $p_f,n_f$, exists for all $f\in \mathbb N^{\mathbb N}$, 
    then there would be a single condition $\bar p$ and integer $\bar n$
    satisfying that $\tilde F = \{ f\in F: p_f = \bar p, n_f=\bar n\}$ is
    $<^*$-cofinal
    in $\mathbb N^{\mathbb N}$.  Since $\bar p$ forces
    that $\tilde F\restriction I_\delta$
     is a subset of the closure of $\dot F_{\bar n}$, 
     it follows from the failure of (2)
      that $[\tilde F\restriction I_\delta]^2$ is disjoint from $R$. 
       But this means that, for all $m\in I_\delta\setminus \bar m$,
        the family $\{ [c^m_f,d^m_f ] : f\in \tilde F\}$ 
        is linked. Since a linked family of compact intervals has non-empty
        intersection, this, together with the cofinality of $F$,
 would imply that $I_\delta$ is in the trivial ideal.
 
 So we have now proven there is some $f\in \mathbb N^{\mathbb N}$
 such that there is no pair $p_f, n_f$ as above. By elementarity
 there is such an $f\in M\cap {\mathbb N}^{\mathbb N}\subset
 \mathfrak U$. The failure of there being a pair $p_f, n_f$ is equivalent
 to the assertion, that for each $n\in\omega$, there is a
 subset $E(q,f,n)$ of $\Poset (\{p_\xi : \xi <\eta\})$ that is 
 dense below $q$ and for each $e\in E(q,f,n)$, there is a
 basic open neighborhood of $f\restriction I_\delta$ 
 in the topology on $S^{\mathbb N}$ such that
 $e$ forces that $\dot F_n$ is disjoint from this open set.
 
 It follows then the choice of $p_\eta$ as in Lemma \ref{oracle}
 will result in the family $\{ \dot F_n : n\in \omega\}$ considered
 as $\Poset (\mathcal F)$-names will not be  the dense sets
 for a countable   cover of $F$ consisting of sets
 whose set of pairs are disjoint from $R$.  The proof
is completed, i.e. a contradiction reached,  by applying
the PFA methodology discussed following Lemma \ref{getF}.  
 \end{proof}

\begin{bibdiv}

\def\cprime{$'$} 

\begin{biblist}

\bib{Baumgartner}{article}{
   author={Baumgartner, James E.},
   title={Applications of the proper forcing axiom},
   conference={
      title={Handbook of set-theoretic topology},
   },
   book={
      publisher={North-Holland, Amsterdam},
   },
   date={1984},
   pages={913--959},
   review={\MR{776640}},
}

\bib{DowPAMS}{article}{
   author={Dow, Alan},
   title={A non-trivial copy of $\beta\mathbb{N}\setminus\mathbb{N}$},
   journal={Proc. Amer. Math. Soc.},
   volume={142},
   date={2014},
   number={8},
   pages={2907--2913},
   issn={0002-9939},
   review={\MR{3209343}},
   doi={10.1090/S0002-9939-2014-11985-X},
}

\bib{Dowcccoverfin}{article}{
  author={Alan Dow},
      title={Non-trivial copies of ${\mathbb {N}}^*$}, 
      year={2024},
      eprint={2406.03471},
      journal={arXiv},
      primaryClass={math.GN}
            }

\bib{DH1993}{article}{
   author={Dow, A.},
   author={Hart, K. P.},
   title={\v{C}ech-Stone remainders of spaces that look like $[0,\infty)$},
   note={Selected papers from the 21st Winter School on Abstract Analysis
   (Pod\v{e}brady, 1993)},
   journal={Acta Univ. Carolin. Math. Phys.},
   volume={34},
   date={1993},
   number={2},
   pages={31--39},
   issn={0001-7140},
   review={\MR{1282963}},
}
		
\bib{DSh2}{article}{
   author={Dow, Alan},
   author={Shelah, Saharon},
   title={More on tie-points and homeomorphism in $\Bbb N^\ast$},
   journal={Fund. Math.},
   volume={203},
   date={2009},
   number={3},
   pages={191--210},
   issn={0016-2736},
   review={\MR{2506596}},
   doi={10.4064/fm203-3-1},
}
 \bib{Farah2000}{article}{
   author={Farah, Ilijas},
   title={Analytic quotients: theory of liftings for quotients over analytic
   ideals on the integers},
   journal={Mem. Amer. Math. Soc.},
   volume={148},
   date={2000},
   number={702},
   pages={xvi+177},
   issn={0065-9266},
   review={\MR{1711328}},
   doi={10.1090/memo/0702},
}
		
\bib{Hart92}{article}{
   author={Hart, Klaas Pieter},
   title={The \v{C}ech-Stone compactification of the real line},
   conference={
      title={Recent progress in general topology},
      address={Prague},
      date={1991},
   },
   book={
      publisher={North-Holland, Amsterdam},
   },
   isbn={0-444-89674-0},
   date={1992},
   pages={317--352},
   review={\MR{1229130}},
   doi={10.1016/0887-2333(92)90021-I},
}
\bib{Kunen}{book}{
   author={Kunen, Kenneth},
   title={Set theory},
   series={Studies in Logic and the Foundations of Mathematics},
   volume={102},
   note={An introduction to independence proofs},
   publisher={North-Holland Publishing Co., Amsterdam-New York},
   date={1980},
   pages={xvi+313},
   isbn={0-444-85401-0},
   review={\MR{597342}},
}
	
	\bib{WRudin}{article}{
   author={Rudin, Walter},
   title={Homogeneity problems in the theory of \v{C}ech compactifications},
   journal={Duke Math. J.},
   volume={23},
   date={1956},
   pages={409--419},
   issn={0012-7094},
   review={\MR{80902}},
}

 \bib{ShelahProper}{book}{
   author={Shelah, Saharon},
   title={Proper forcing},
   series={Lecture Notes in Mathematics},
   volume={940},
   publisher={Springer-Verlag, Berlin-New York},
   date={1982},
   pages={xxix+496},
   isbn={3-540-11593-5},
   review={\MR{675955}},
}
 
 \bib{ShSt88}{article}{
   author={Shelah, Saharon},
   author={Stepr\={a}ns, Juris},
   title={PFA implies all automorphisms are trivial},
   journal={Proc. Amer. Math. Soc.},
   volume={104},
   date={1988},
   number={4},
   pages={1220--1225},
   issn={0002-9939},
   review={\MR{935111}},
   doi={10.2307/2047617},
}
 
 \bib{ShStvel}{article}{
   author={Shelah, Saharon},
   author={Stepr\={a}ns, Juris},
   title={Somewhere trivial autohomeomorphisms},
   journal={J. London Math. Soc. (2)},
   volume={49},
   date={1994},
   number={3},
   pages={569--580},
   issn={0024-6107},
   review={\MR{1271551}},
   doi={10.1112/jlms/49.3.569},
}
 
 \bib{ShSt2001}{article}{
   author={Shelah, Saharon},
   author={Stepr\={a}ns, Juris},
   title={Martin's axiom is consistent with the existence of nowhere trivial
   automorphisms},
   journal={Proc. Amer. Math. Soc.},
   volume={130},
   date={2002},
   number={7},
   pages={2097--2106},
   issn={0002-9939},
   review={\MR{1896046}},
   doi={10.1090/S0002-9939-01-06280-3},
}

\bibitem{step.29}
Juris Stepr{\=a}ns, 
\emph{The autohomeomorphism group of the \v {C}ech-{S}tone
  compactification of the integers}, 
Trans. Amer. Math. Soc. \textbf{355}
  (2003), no.~10, 4223--4240 (electronic). \MR{1990584 (2004e:03087)}

 \bib{TodOCA}{book}{
   author={Todor\v{c}evi\'{c}, Stevo},
   title={Partition problems in topology},
   series={Contemporary Mathematics},
   volume={84},
   publisher={American Mathematical Society, Providence, RI},
   date={1989},
   pages={xii+116},
   isbn={0-8218-5091-1},
   review={\MR{0980949}},
   doi={10.1090/conm/084},
}

 \bib{Vel86}{article}{
   author={Velickovi\'{c}, Boban},
   title={Definable automorphisms of ${\scr P}(\omega)/{\rm fin}$},
   journal={Proc. Amer. Math. Soc.},
   volume={96},
   date={1986},
   number={1},
   pages={130--135},
   issn={0002-9939},
   review={\MR{813825}},
   doi={10.2307/2045667},
}
 
 \bib{VelOCA}{article}{
   author={Velickovi\'{c}, Boban},
   title={${\rm OCA}$ and automorphisms of ${\scr P}(\omega)/{\rm fin}$},
   journal={Topology Appl.},
   volume={49},
   date={1993},
   number={1},
   pages={1--13},
   issn={0166-8641},
   review={\MR{1202874}},
   doi={10.1016/0166-8641(93)90127-Y},
}

  \bib{Vel92}{article}{
   author={Velickovi\'{c}, Boban},
   title={Forcing axioms and stationary sets},
   journal={Adv. Math.},
   volume={94},
   date={1992},
   number={2},
   pages={256--284},
   issn={0001-8708},
   review={\MR{1174395}},
   doi={10.1016/0001-8708(92)90038-M},
}

\bib{Vignati}{article}{
   author={Vignati, Alessandro},
   title={Rigidity conjectures for continuous quotients},
   language={English, with English and French summaries},
   journal={Ann. Sci. \'{E}c. Norm. Sup\'{e}r. (4)},
   volume={55},
   date={2022},
   number={6},
   pages={1687--1738},
   issn={0012-9593},
   review={\MR{4517685}},
}
\end{biblist}

\end{bibdiv}
 
\end{document}